\newtheorem{thmm}{Theorem}
\newtheorem{thm}{Theorem}[section]
\newtheorem{cor}[thm]{Corollary}
\newtheorem{lem}[thm]{Lemma}
\newtheorem{prop}[thm]{Proposition}
\theoremstyle{definition}
\newtheorem{cla}{Claim}
\newtheorem{exe}[thm]{Example}
\newtheorem{rem}[thm]{Remark}
\numberwithin{equation}{section}
\newcommand{\SOL}{\textnormal{SOL}}
\newcommand{\GL}{\textnormal{GL}}
\newcommand{\BS}{\textnormal{BS}}
\newcommand{\eps}{\varepsilon}
\newcommand{\lp}{(\!(}
\newcommand{\rp}{)\!)}
\begin{document}

\title[Metabelian groups with quadratic Dehn function]{Metabelian groups with quadratic Dehn function and Baumslag-Solitar groups}

\author{Yves de Cornulier}
\address{IRMAR \\ Campus de Beaulieu \\
35042 Rennes Cedex, France}
\email{yves.decornulier@univ-rennes1.fr}

\author{Romain Tessera}
\address{UMPA, ENS Lyon\\46 All\'ee d'Italie\\ 69364 Lyon Cedex\\ France}
\email{rtessera@umpa.ens-lyon.fr}

\subjclass[2000]{20F65 (primary); 20F69, 20F16, 53A10 (secondary)}

		
\date{November 26, 2010}
\maketitle

\begin{abstract}
We prove that a certain class of metabelian locally compact groups have quadratic Dehn function. As an application, we embed the solvable Baumslag-Solitar groups in finitely presented metabelian groups with quadratic Dehn function. Also, we prove that Baumslag's finitely presented metabelian groups, in which the lamplighter groups embed, have quadratic Dehn function. 
\end{abstract}


\section{Introduction}

The {\it Dehn function} of a finitely presented group is a natural invariant, which from the combinatorial point of view is a measure of the complexity of the word problem, and from the geometric point of view can be interpreted as an isoperimetry invariant. We refer the reader to Bridson's survey \cite{Bridson} for a detailed discussion. Given a group with a solvable word problem, it is natural to wonder whether it can be embedded into a group with small Dehn function; a major result in this context is a characterization of subgroups of groups with at most polynomial Dehn function as those groups with NP solvable word problem \cite{BORS}. It is then natural to ask for which groups this can be improved. 
Word hyperbolic groups are characterized as those with linear Dehn function and there are many known stringent restrictions on their subgroups; for instance, their abelian subgroups are virtually cyclic. In particular, they cannot contain a solvable Baumslag-Solitar group 
$$\BS(1,n)=\langle t,x\,|\;txt^{-1}=x^n\rangle$$
for any integer $n$ with $|n|\ge 1$. The next question is whether the group is embeddable into a finitely presented group with quadratic Dehn function. We obtain here a positive answer for the solvable Baumslag-Solitar groups.
The group $\BS(1,n)$ is well-known to have an exponential Dehn function whenever $|n|\ge 2$, as for instance it is proved in \cite{GH} that any strictly ascending HNN-extension of a finitely generated nilpotent group has an exponential Dehn function.

\begin{thmm}\label{main}
The solvable Baumslag-Solitar group $\BS(1,n)$ can be embedded into a finitely presented metabelian group with quadratic Dehn function.
\end{thmm}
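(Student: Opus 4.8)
The plan is to embed $\BS(1,n)$ into a higher-rank finitely presented metabelian group $G$ and to obtain the quadratic bound by realizing $G$ as a cocompact lattice in a locally compact group for which one can prove a quadratic filling estimate. Writing $n=p_1^{a_1}\cdots p_k^{a_k}$ and fixing an auxiliary prime $q\nmid n$, set
$$A=\mathbb{Z}\!\left[\tfrac1n,\tfrac1q\right],\qquad Q=\langle s_1,\dots,s_k,s_0\rangle\cong\mathbb{Z}^{k+1},$$
and let $Q$ act on the additive group $A$ by $s_i=(\times p_i)$ for $1\le i\le k$ and $s_0=(\times q)$. These multiplications commute and are multiplicatively independent, so $G=A\rtimes Q$ is a finitely generated metabelian group in which $A$ is a cyclic $\mathbb{Z}[Q]$-module. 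The element $t=s_1^{a_1}\cdots s_k^{a_k}$ acts as $\times n$ and preserves $\mathbb{Z}[1/n]\subseteq A$, so $\mathbb{Z}[1/n]\rtimes\langle t\rangle\cong\BS(1,n)$ embeds in $G$. The gain is that the acting group $Q$ now has rank $k+1\ge2$; the exponential Dehn function of $\BS(1,n)$ is a rank-one effect, and I expect the second independent direction to be precisely what permits quadratic fillings.

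First I would verify that $G$ is finitely presented. By the Bieri--Strebel criterion this amounts to showing that the complement $\Sigma^c$ of the $\Sigma$-invariant in the sphere $S(Q)$ contains no pair of antipodal points. For the module $A$ only the finite places $p_1,\dots,p_k,q$ obstruct finite generation over a submonoid ring $\mathbb{Z}[Q_\chi]$, the archimedean growth being absorbed by the unbounded integer coefficients; a direct computation then identifies $\Sigma^c$ with the finite set of positive coordinate rays $\{[e_1],\dots,[e_k],[e_0]\}$, no two of which are antipodal. Hence $G$ is finitely presented.

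Next I would realize $G$ as a cocompact lattice. Embedding $A$ diagonally into $\mathcal A=\mathbb{R}\times\prod_{\ell\in S}\mathbb{Q}_\ell$, where $S=\{p_1,\dots,p_k,q\}$, exhibits $A$ as a discrete cocompact subgroup of $\mathcal A$, on which $Q$ continues to act by the same diagonal multiplications (each an automorphism of $\mathcal A$). Thus $G=A\rtimes Q$ is a cocompact lattice in the compactly generated locally compact group $\mathcal G=\mathcal A\rtimes Q$, so $G$ and $\mathcal G$ are quasi-isometric and have the same Dehn function up to the usual equivalence.

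The heart of the argument, and the step I expect to be the main obstacle, is the quadratic filling estimate for $\mathcal G$. Each factor of $\mathcal A$ contributes a weight in $\mathrm{Hom}(Q,\mathbb{R})\cong\mathbb{R}^{k+1}$; these $k+2$ weights sum to zero by the product formula and positively span, so $0$ lies in the interior of their convex hull, and $\operatorname{rank}Q\ge2$. This higher-rank non-degeneracy is the input I would feed into a general theorem for metabelian locally compact groups of the form $\mathcal A\rtimes\mathbb{Z}^d$: the difficulty is that filling along a single contracting (horospherical) direction is intrinsically exponential, exactly as in $\BS(1,n)$ and in $\SOL$, and the whole point is to use two independent directions to spread a filling over a two-parameter family of flats and thereby bound the area of a loop of length $R$ by $O(R^2)$. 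I would isolate and prove this quadratic upper bound as a general statement, the matching lower bound being immediate from Gromov's theorem since $\mathcal G$ is not hyperbolic (it contains a copy of $\mathbb{Z}^2$). Finally, as the Dehn function is a quasi-isometry invariant among compactly generated locally compact groups, the quadratic bound passes from $\mathcal G$ to the finitely presented metabelian group $G\supseteq\BS(1,n)$, completing the proof.
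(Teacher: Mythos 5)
Your construction itself is sound and is a legitimate alternative to the paper's: $G=\mathbf{Z}\left[\frac1n,\frac1q\right]\rtimes\mathbf{Z}^{k+1}$ with an auxiliary prime $q$ is finitely generated metabelian, contains $\BS(1,n)$ via $t=s_1^{a_1}\cdots s_k^{a_k}$, and sits as a cocompact lattice in $\mathcal G=\bigl(\mathbf{R}\times\prod_{\ell\in S}\mathbf{Q}_\ell\bigr)\rtimes\mathbf{Z}^{k+1}$, so that quasi-isometry invariance of the Dehn function transfers any bound from $\mathcal G$ to $G$. The paper does the analogous thing with $\Gamma_n=\mathbf{Z}[1/n]^2\rtimes_{(A,B)}\mathbf{Z}^2$, where $B\in\GL_2(\mathbf{Z})$ is hyperbolic: there the extra independent direction is furnished by the two real eigendirections $V_\pm$ of $B$ rather than by an auxiliary prime, and the acting group has rank $2$ regardless of the number of prime divisors of $n$. (Your separate Bieri--Strebel step is also dispensable in the paper's scheme: compact presentability of $\mathcal G$ is part of the conclusion of the filling theorem, and finite presentability of the cocompact lattice $G$ follows.)

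The genuine gap is that you do not prove the quadratic filling estimate for $\mathcal G$, and this is the entire technical content of the result: in the paper it is Theorem \ref{meta}, whose proof occupies Sections \ref{TrickSection} and \ref{ProofSection} (Gromov's reduction of the Dehn function to words of the special form $\bigl(\prod_i t_iv_it_i^{-1}\bigr)t$ via Lemma \ref{lemf} and Proposition \ref{quadra}, then the two area estimates, Claims \ref{recon} and \ref{coqua}, carried out in an explicit presentation). Worse, the hypothesis you propose to feed into your hoped-for general theorem --- the weights sum to zero and positively span, i.e.\ $0$ lies in the interior of their convex hull, with rank at least $2$ --- is insufficient, and a general statement under that hypothesis is false. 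The weight configuration $\{\pm e_1,\pm e_2\}$ satisfies it, yet the corresponding group $\mathbf{R}^4\rtimes\mathbf{Z}^2$, with the two generators acting by $(\lambda,\lambda^{-1},1,1)$ and $(1,1,\lambda,\lambda^{-1})$, is essentially $\SOL\times\SOL$ and has exponential Dehn function, since a rank-one $\SOL$ factor is a retract. The correct condition --- exactly the hypothesis of Theorem \ref{meta} --- is that every \emph{pair} of factors admits a common strict contraction in the acting group, equivalently that no two weights are negatively proportional: an antipodal pair of weights is the $\SOL$/lamplighter obstruction, and no amount of global higher rank removes it. Your specific group does satisfy this pairwise condition (for instance $s_is_0^{-N}$ with $N$ large contracts both $\mathbf{R}$ and $\mathbf{Q}_{p_i}$, and $s_is_j$ contracts $\mathbf{Q}_{p_i}$ and $\mathbf{Q}_{p_j}$), but you neither isolate nor verify it, and your heuristic of ``spreading a filling over a two-parameter family of flats'' cannot be run from the convex-hull condition alone; supplying the pairwise-contraction hypothesis and the proof of Theorem \ref{meta} is precisely what is missing.
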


This answers a question asked in \cite{BORS}.
It was previously known, by the general result mentioned above, that it can be embedded into a ``big" finitely presented group with polynomial Dehn function \cite{BORS}. Then $\BS(1,n)$ was proved to be embeddable in a finitely presented metabelian group with at most cubic Dehn function in \cite{AO}. Theorem \ref{main} is optimal, because as we mentioned above, $\BS(1,n)$ cannot be embedded into a word hyperbolic group, and all other groups have a quadratic lower bound on their Dehn function \cite{Ols}. Quadratic Dehn function groups enjoy some special properties not shared by all polynomial Dehn function groups, for instance they have all asymptotic cones simply connected \cite{Papa}. After Gromov \cite{Gromov}, many solvable groups were proved to have quadratic Dehn function (see for instance \cite{allcock,Drutu,Pittet}).

Our construction of a finitely presented metabelian group with quadratic Dehn function in which $\BS(1,n)$ embeds, is elementary and uses representation by matrices; the proof involves an embedding as a cocompact lattice into a group of matrices over a product of local fields. The theorem then follows from a general result (Theorem \ref{meta}), allowing to prove that various metabelian groups, made up from local fields, have quadratic Dehn function. Another application concerns a finitely presented metabelian group $\Lambda_p$, introduced by G.~Baumslag \cite{Bau} as an instance of a finitely presented metabelian group in which the lamplighter group $(\mathbf{Z}/p\mathbf{Z}) \wr \mathbf{Z}$ embeds as a subgroup (see Example \ref{Baumslag group}).
\begin{thmm}\label{main'}
Baumslag's finitely presented metabelian group $\Lambda_p$ has quadratic Dehn function.
\end{thmm}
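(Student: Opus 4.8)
The plan is to reproduce, in positive characteristic, the scheme used for Theorem~\ref{main}: realize $\Lambda_p$ as a cocompact lattice in a metabelian locally compact group assembled from local fields, and then invoke Theorem~\ref{meta}. First I would record the algebraic structure of $\Lambda_p$ obtained in Example~\ref{Baumslag group}. It is a semidirect product $A\rtimes\mathbf{Z}^2$, where $\mathbf{Z}^2=\langle s,t\rangle$ and $A$ is the cyclic $\mathbf{F}_p[s^{\pm 1},t^{\pm 1}]$-module on which $t$ acts as multiplication by $1+s$; equivalently $A\cong\mathbf{F}_p[s^{\pm 1},(1+s)^{\pm 1}]$, with $s$ and $t=1+s$ acting by multiplication, and the lamplighter $(\mathbf{Z}/p\mathbf{Z})\wr\mathbf{Z}$ sits inside as $\mathbf{F}_p[s^{\pm 1}]\rtimes\langle s\rangle$.

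The key observation is that $A$ is precisely the ring of $S$-integers of the global function field $\mathbf{F}_p(s)$ for $S=\{0,-1,\infty\}$: these are exactly the three ($\mathbf{F}_p$-rational) places at which an element of $A$ is allowed to have a pole. I would therefore embed $A$ diagonally into the product $V=K_0\times K_{-1}\times K_\infty$ of the three completions, each isomorphic to the local field $\mathbf{F}_p\lp\pi\rp$, where by the function-field form of strong approximation $A$ sits as a cocompact lattice. Since $s$ and $1+s$ are $S$-units generating a rank-two group, multiplication by them extends continuously to $V$ and defines a faithful diagonal action of $\mathbf{Z}^2$; setting $G=V\rtimes\mathbf{Z}^2$ I obtain a metabelian locally compact group containing $\Lambda_p$ as a cocompact lattice. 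As the Dehn function is a quasi-isometry invariant among compactly presented groups and $\Lambda_p$ is quasi-isometric to $G$, it then suffices to prove that $G$ has quadratic Dehn function.

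The heart of the argument is to match the hypotheses of Theorem~\ref{meta} against $G$, and this is where I expect the main obstacle to lie. The relevant data are the three weights, i.e.\ the homomorphisms $\mathbf{Z}^2\to\mathbf{R}$, $(i,j)\mapsto\log|s^i(1+s)^j|_v$ for $v\in\{0,-1,\infty\}$; after normalization these are $-i$, $-j$ and $i+j$. They sum to zero by the product formula, span $(\mathbf{R}^2)^\ast$, and are pairwise non-proportional, so the configuration contains no $\SOL$-type degeneracy (a pair of opposite weights with no further directions), which is what one expects Theorem~\ref{meta} to forbid. Two points then require care. First, all three places are non-archimedean and of positive characteristic, unlike the mixed archimedean/non-archimedean situation available for $\BS(1,n)$, so I must check that Theorem~\ref{meta} genuinely applies in this purely positive-characteristic regime. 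Second, and more delicate, is confirming that the specific configuration $\{-i,-j,i+j\}$ falls inside the range covered by the theorem rather than in the degenerate exponential case — that is, that the three contracting/expanding directions suffice to fill loops in $G$ with quadratic area. Once the hypotheses are verified, the conclusion for $G$, and hence for $\Lambda_p$, is immediate.
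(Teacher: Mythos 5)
Your proposal is correct and follows essentially the same route as the paper: your group $G=(K_0\times K_{-1}\times K_\infty)\rtimes\mathbf{Z}^2$ is exactly the group the paper produces via the explicit diagonal embedding $P\mapsto(P(u),P(u^{-1}),P(u-1))$ of Proposition~\ref{latp}, namely the cocompact subgroup $\mathbf{F}_p\lp u\rp^3\rtimes\mathbf{Z}^2$ of $\SOL_5(\mathbf{F}_p\lp u\rp)$ to which Theorem~\ref{meta} is applied in Example~\ref{Baumslag group}, the passage through $\SOL_5$ being cosmetic. The two points you flag as delicate are in fact non-issues: Theorem~\ref{meta} is stated for arbitrary products of local fields, with no archimedean factor required, and its hypothesis is precisely the pairwise-common-contraction condition, which your weight configuration $\{-i,-j,i+j\}$ verifies since no two of these linear forms are negatively proportional, so any two of the half-planes where they are negative intersect.
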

A polynomial upper bound was recently obtained by Kassabov and Riley \cite{KR}.

\medskip

We hope that the method developed in the paper will convince the reader that the study of the Dehn function of discrete groups is indissociable from its study for general locally compact groups, a fact more generally true in the problem of quasi-isometry classification of solvable groups.

\subsection*{Organisation}
In the next section, we describe our embedding of the Baumslag-Solitar group. In Section~\ref{metaSection}, we state our fundamental result, namely Theorem~\ref{meta}. Section \ref{TrickSection} is dedicated to an important technical lemma whose basic idea is mainly due to Gromov. Finally, Section \ref{ProofSection} contains the proof of Theorem \ref{meta}.

\medskip
\noindent \textbf{Acknowledgement.} We thank Mark Sapir for suggesting us this problem and for valuable discussions. We are indebted to the referee for many corrections and clarifications.


\section{Construction of the embedding}

Our results use, in an essential way, a notion of Dehn function not restricted to discrete groups.
Let $G$ be any locally compact group. Recall \cite[Section~1.1]{A} that $G$ is {\it compactly presented} if for some/any compact generating symmetric set $S$ and some $k$ (depending on $S$), there exists a presentation of the abstract group $G$ with $S$ as set of generators, and relators of length $\le k$. If $G$ is discrete, this amounts to say that $G$ is finitely presented. If $(S,k)$ is fixed, a relation means a word $w$ in the letters of $S$ which represents the trivial element of $G$; its {\it area} is the least $m$ such that $w$ is a product, in the free group, of $\le m$ conjugates of relations of length $\le k$. The {\it Dehn function} of $G$ is defined as
$$\delta(n)=\sup\{\text{area}(w)|w \text{ relation of length }\le n\}.$$
The precise value of $\delta(n)$ depends on $(S,k)$, but not the $\approx$-asymptotic behavior, where $u(n)\approx v(n)$ if for suitable positive constants $a_1,\dots,b_4$ independent of $n$
$$a_1u(a_2n)-a_3n-a_4\le v(n)\le b_1u(b_2n)+b_3n+b_4,\quad\forall n\ge 0.$$
In the combinatorial point of view, the Dehn function of a finitely presented group is a measure of the complexity of its word problem. In the geometric point of view, compact presentability means simple connectedness at large scale \cite[1.$\textnormal{C}_1$]{Gromov}, and the Dehn function appears as a quantified version, and an upper bound on the Dehn function is often referred as an ``isoperimetric inequality".

For any non-zero integer $n\in\mathbf{Z}-\{0\}$, the solvable Baumslag-Solitar group can be described as $$\BS(1,n)=\mathbf{Z}[1/n]\rtimes\mathbf{Z},$$
where $\mathbf{Z}$ acts on $\mathbf{Z}[1/n]$ by multiplication by $n$.

Consider the two commuting matrices $A=\begin{pmatrix}
  n & 0 \\
  0 & n \\
\end{pmatrix}$, $B=\begin{pmatrix}
  2 & 1 \\
  1 & 1 \\
\end{pmatrix}$ ($B$ can be replaced by any matrix in $\GL_2(\mathbf{Z})$ with two real eigenvalues not of modulus one). Define the group $$\Gamma_n=\mathbf{Z}[1/n]^2\rtimes_{(A,B)}\mathbf{Z}^2.$$
Clearly, $\Gamma_n$ is finitely generated, since we can ``go up" in $\mathbf{Z}[1/n]^2$ by conjugating by $A$. Moreover, it contains an obvious copy of $\BS(1,n)$, namely $(\mathbf{Z}[1/n]\times\{0\})\rtimes(\mathbf{Z}\times\{0\})$.

\begin{thm}\label{gaga}
The group $\Gamma_n$ is finitely presented with quadratic Dehn function.
\end{thm}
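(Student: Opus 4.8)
The plan is to realize $\Gamma_n$ as a cocompact lattice in a locally compact metabelian group $G$ built from local fields, to which Theorem~\ref{meta} applies, and then to transport the conclusion along the quasi-isometry furnished by the lattice embedding; throughout I assume $|n|\ge 2$, the only case relevant to Theorem~\ref{main}. Let $S$ denote the finite set of primes dividing $n$ and set $K=\mathbf{R}\times\prod_{p\in S}\mathbf{Q}_p$. By the standard diagonal embedding, $\mathbf{Z}[1/n]$ is a cocompact lattice in the additive group $K$, so that $\mathbf{Z}[1/n]^2$ is a cocompact lattice in $V:=K^2=\mathbf{R}^2\times\prod_{p\in S}\mathbf{Q}_p^2$. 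The commuting matrices $A$ and $B$ have entries in $\mathbf{Z}[1/n]\subset K$, hence act diagonally (place by place) as commuting automorphisms of $V$, generating a copy of $\mathbf{Z}^2$; I then form
$$G:=V\rtimes_{(A,B)}\mathbf{Z}^2.$$

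First I would check that $\Gamma_n$ is a cocompact lattice in $G$. The acting groups agree ($\mathbf{Z}^2=\langle A,B\rangle$ in both), and $\mathbf{Z}[1/n]^2$ is a cocompact lattice in $V$ stabilised by the $\mathbf{Z}^2$-action; consequently $\Gamma_n=\mathbf{Z}[1/n]^2\rtimes\mathbf{Z}^2$ embeds as a discrete subgroup of $G$ with $G/\Gamma_n\cong V/\mathbf{Z}[1/n]^2$ compact. Thus $\Gamma_n$ is a cocompact lattice in $G$, hence quasi-isometric to $G$ by the Milnor--\v Svarc lemma for locally compact groups.

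Next I would verify that $G$ falls within the scope of Theorem~\ref{meta}: it is metabelian, $V$ is a finite product of modules over local fields, and the acting group $\mathbf{Z}^2$ is compactly generated abelian. The decisive point is the non-degeneracy hypothesis on the weights. Working in the eigenbasis of $B$ on $\mathbf{R}^2$, the real place contributes the two weights $\alpha_{\pm}=(\log|n|,\pm\log\lambda)\in\mathrm{Hom}(\mathbf{Z}^2,\mathbf{R})$, where $\lambda>1$ is the dominant eigenvalue of $B$; at each $p\in S$ the scalar $A=nI$ contracts the $p$-adic norm by $|n|_p=p^{-v_p(n)}$ while $B\in\GL_2(\mathbf{Z})\subset\GL_2(\mathbf{Z}_p)$ acts isometrically, giving the weight $\beta_p=(-v_p(n)\log p,\,0)$. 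Since $\lambda$ has modulus different from $1$, the weights $\alpha_\pm$ are genuinely two-dimensional, and a short computation (taking equal coefficients on $\alpha_+,\alpha_-$ to cancel the second coordinate, then balancing against the $\beta_p$ in the first) shows that the origin lies in the interior of the convex hull of $\{\alpha_+,\alpha_-\}\cup\{\beta_p:p\in S\}$ as soon as $|n|\ge 2$. This is exactly the rank-$\ge 2$, $\SOL$-free configuration required for quadratic rather than exponential behaviour, and Theorem~\ref{meta} then yields that $G$ is compactly presented with quadratic Dehn function.

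Finally, since both the property of being compactly presented and the Dehn function up to the equivalence $\approx$ are quasi-isometry invariants, the quasi-isometry $\Gamma_n\sim G$ shows that $\Gamma_n$ is finitely presented with quadratic Dehn function. I expect the main obstacle to be the verification of the weight hypothesis of Theorem~\ref{meta}: it is precisely here that the two features of the construction are used, namely that $B$ has real eigenvalues off the unit circle (so that $\alpha_+,\alpha_-$ span a genuine plane and place the origin in the \emph{interior} of the convex hull, instead of on its boundary as happens for $\BS(1,n)$ itself) and that $A$ is the scalar $n$ with $|n|\ge 2$ (supplying, through the places $p\in S$, the $p$-adic contractions $\beta_p$ that close the hull on the negative side). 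When $|n|=1$ this configuration collapses to the line $\{\log|n|=0\}$ and one is left with a degenerate, $\SOL$-type situation outside the scope of Theorem~\ref{meta}; but $\BS(1,\pm 1)$ is already $\mathbf{Z}^2$ or the Klein bottle group, so this case is irrelevant here.
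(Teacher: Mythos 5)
Your global route is exactly the paper's: embed $\Gamma_n$ as a cocompact lattice in $(\mathbf{Q}_n^2\oplus\mathbf{R}^2)\rtimes\mathbf{Z}^2$, invoke quasi-isometry invariance of the Dehn function, and apply Theorem~\ref{meta}. The gap is in the step you yourself single out as decisive: you verify the wrong hypothesis. Theorem~\ref{meta} does not ask that the origin lie in the interior of the convex hull of the weights; it asks that for every \emph{pair} of factors some single element of the acting group strictly contract both. The two conditions are inequivalent: with $d=2$ and weights $e_1,-e_1,e_2,-e_2$ (a product of two $\SOL$-type groups) the origin is interior to the hull, yet the antipodal pair admits no common contraction and the Dehn function is exponential. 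Worse, your calibration against $\BS(1,n)$ is false: there the weights are $\log|n|>0$ and $-v_p(n)\log p<0$, so the origin lies in the \emph{interior} of their convex hull in $\mathrm{Hom}(\mathbf{Z},\mathbf{R})\cong\mathbf{R}$, not on its boundary --- and $\BS(1,n)$ nevertheless has exponential Dehn function. So the criterion you check would ``prove'' a false statement; what actually separates $\Gamma_n$ from $\BS(1,n)$ is the pairwise common-contraction condition, which for $\BS(1,n)$ fails for the pair (real place, $p$-adic place).

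The error is local, and your own weight computation repairs it: with $\alpha_\pm=(\log|n|,\pm\log\lambda)$ and $\beta_p=(-v_p(n)\log p,0)$, the element $A^{-1}$ contracts both $V_+$ and $V_-$; for $N$ with $N\log\lambda>\log|n|$, the element $AB^{-N}$ contracts $V_+$ and every $\mathbf{Q}_p^2$, while $AB^{N}$ contracts $V_-$ and every $\mathbf{Q}_p^2$; and $A$ itself handles pairs of $p$-adic factors. This is, up to signs and powers, precisely the bullet list the paper records before invoking Theorem~\ref{meta}. Two further remarks. First, Theorem~\ref{meta} requires the action on each factor to be by \emph{scalars}, and $B$ is not scalar on $\mathbf{Q}_p^2$: one should split $\mathbf{Q}_p^2$ into $B$-eigenlines when the eigenvalues lie in $\mathbf{Q}_p$, and otherwise identify $\mathbf{Q}_p^2$ with the local field $\mathbf{Q}_p(\sqrt{5})$, on which $B$ acts as a scalar of norm one (so $\beta_p$ is as you computed); the paper leaves this refinement implicit as well, so you are in good company, but ``$B$ acts isometrically'' alone does not put $G$ in the scope of the theorem as stated. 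Second, your explicit restriction to $|n|\ge 2$ is apt and implicitly needed in the paper too: for $|n|=1$ the weights $\alpha_+,\alpha_-$ are antipodal, the pairwise hypothesis genuinely fails, and indeed $\Gamma_{\pm 1}$ is virtually $(\mathbf{Z}^2\rtimes_B\mathbf{Z})\times\mathbf{Z}$, whose Dehn function is exponential --- but, as you observe, this failure is exactly a degenerate $\SOL$ configuration, correctly diagnosed by the pairwise criterion rather than by the convex-hull one.
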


\begin{cor}
The group $\BS(1,n)$ can be embedded into a finitely presented group with quadratic Dehn function.
\end{cor}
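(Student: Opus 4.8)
The plan is to deduce the corollary directly from Theorem~\ref{gaga}, since all the analytic difficulty is already packaged there. Theorem~\ref{gaga} provides a single finitely presented group, namely $\Gamma_n=\mathbf{Z}[1/n]^2\rtimes_{(A,B)}\mathbf{Z}^2$, which has quadratic Dehn function. So it suffices to exhibit $\BS(1,n)$ as a subgroup of $\Gamma_n$ and take $\Gamma_n$ itself as the ambient group; I do \emph{not} need $\BS(1,n)$ to have quadratic Dehn function (it has exponential Dehn function for $|n|\ge 2$), only to embed into a group that does.

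First I would make the embedding explicit, as indicated just before Theorem~\ref{gaga}. Write $\mathbf{Z}^2=\langle t_1,t_2\rangle$ with $t_1$ acting through $A=nI$ and $t_2$ through $B$. The subset $(\mathbf{Z}[1/n]\times\{0\})\rtimes(\mathbf{Z}\times\{0\})$, where $\mathbf{Z}\times\{0\}=\langle t_1\rangle$, is a genuine subgroup: the line $\mathbf{Z}[1/n]\times\{0\}$ is invariant under $A$, since $A(x,0)=(nx,0)$, and on its first coordinate $t_1$ acts by multiplication by $n$. Hence this subgroup is isomorphic to $\mathbf{Z}[1/n]\rtimes\mathbf{Z}=\BS(1,n)$, the $\mathbf{Z}$ acting by multiplication by $n$. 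A subgroup inclusion is in particular an injective homomorphism, so $\BS(1,n)$ embeds in $\Gamma_n$, and the corollary follows at once from Theorem~\ref{gaga}.

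The only real content is therefore Theorem~\ref{gaga}, and that is where I expect the main obstacle to lie. The strategy for it is to realize $\Gamma_n$ as a cocompact lattice in a locally compact metabelian group $G=V\rtimes\mathcal A$ assembled from local fields: one takes $V=\big(\mathbf{R}\times\prod_{p\mid n}\mathbf{Q}_p\big)^2$, in which $\mathbf{Z}[1/n]^2$ sits diagonally as a cocompact lattice invariant under the commuting automorphisms $A$ and $B$, and one thickens the acting $\mathbf{Z}^2$ in its Archimedean directions to an abelian group $\mathcal A$ into which $\mathbf{Z}^2$ embeds cocompactly. The two delicate points are: (i) that this $\mathbf{Z}^2$ really is a \emph{lattice} in the Archimedean torus of diagonal flows, which requires $\log|n|$ and $\log\lambda$, with $\lambda=\tfrac{3+\sqrt5}{2}$ the dominant eigenvalue of $B$, to be linearly independent over $\mathbf{Q}$, i.e.\ that the eigenvalues of $A$ and $B$ are multiplicatively independent (this is where the hypothesis that $B$ has real eigenvalues not of modulus one is used); and (ii) that the action of $\mathcal A$ on each local factor of $V$ is sufficiently ``hyperbolic''—no direction acting with all absolute values equal to $1$—so that $G$ satisfies the hypotheses of Theorem~\ref{meta}.

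Granting that $\Gamma_n$ is a cocompact lattice in such a $G$, Theorem~\ref{meta} yields that $G$ is compactly presented with quadratic Dehn function. One then transfers this to $\Gamma_n$ by the Milnor--\v{S}varc lemma for compactly generated locally compact groups: a cocompact lattice is quasi-isometric to the ambient group, and among compactly presented groups both finite presentability and the $\approx$-class of the Dehn function are quasi-isometry invariants. This gives that $\Gamma_n$ is finitely presented with quadratic Dehn function, which is Theorem~\ref{gaga}, and combined with the explicit embedding of the second paragraph it proves the corollary.
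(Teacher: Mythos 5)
Your proof of the corollary itself is correct and is exactly the paper's (implicit) argument: the subgroup $(\mathbf{Z}[1/n]\times\{0\})\rtimes(\mathbf{Z}\times\{0\})$ of $\Gamma_n$ is visibly isomorphic to $\mathbf{Z}[1/n]\rtimes_n\mathbf{Z}=\BS(1,n)$, and Theorem~\ref{gaga} does the rest; no independent Dehn-function estimate for $\BS(1,n)$ is needed. One caveat on your sketch of Theorem~\ref{gaga}, though: the paper does \emph{not} thicken the acting $\mathbf{Z}^2$ to a torus of diagonal flows. It embeds $\Gamma_n$ cocompactly in $(\mathbf{Q}_n^2\oplus\mathbf{R}^2)\rtimes\mathbf{Z}^2$ with the acting group still the discrete $\mathbf{Z}^2$, which is permitted because Theorem~\ref{meta} is stated for a finitely generated abelian group $A$ acting by scalars. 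Consequently your ``delicate point (i)'' --- the rational independence of $\log|n|$ and $\log\lambda$, needed for $\mathbf{Z}^2$ to be a lattice in an Archimedean torus --- simply does not arise in the paper's argument (it happens to hold for these matrices, since a power of the unit $\lambda=(3+\sqrt5)/2$ is never a rational integer $\ne\pm1$, but nothing depends on it). What the paper does need, and what you correctly identify as point (ii), is the existence of a common contracting element of $\mathbf{Z}^2$ for each pair among $\mathbf{Q}_n^2$, $V_+$, $V_-$, furnished by $BA^{-1}$, $B^{-1}A^{-1}$ and $A^{-1}$; this is where the hypothesis on the eigenvalues of $B$ is used, not in any Diophantine condition.
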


Theorem \ref{gaga} is obtained by working inside a more convenient group, which contains $\Gamma_n$ as a cocompact lattice.
Let $\mathbf{Q}_p$ denote the $p$-adic field, and define the ring $\mathbf{Q}_n$ as the direct product of all $\mathbf{Q}_p$ when $p$ ranges over the {\it set of distinct} prime divisors of $n$. Then the natural diagonal embedding of $\mathbf{Z}[1/n]$ into $\mathbf{Q}_n\oplus\mathbf{R}$ has discrete cocompact image (check it as an exercise or see \cite[Chap.~IV, \S 2]{Weil}), and the corresponding embedding of $\mathbf{Z}[1/n]^k$ into $\mathbf{Q}_n^k\oplus\mathbf{R}^k$ is equivariant for the natural actions of $\GL_k(\mathbf{Z}[1/n])$.
Accordingly, $\Gamma_n$ stands as a cocompact lattice in the locally compact group
$$(\mathbf{Q}_n^2\oplus\mathbf{R}^2)\rtimes\mathbf{Z}^2,$$
where the action is still defined by the same pair of matrices $(A,B)$ (viewed as matrices over the ring $\mathbf{Q}_n\times\mathbf{R}$). Now we use the fact that the (asymptotic behavior of the) Dehn function is a quasi-isometry invariant (see \cite{Alonso} for a proof in the discrete setting; the same proof working in the general case), so $\Gamma_n$ has the same Dehn function as this larger group.

Decompose $\mathbf{R}^2=V_+\oplus V_-$ along the eigenspaces of $B$, so that $B$ dilates $V_+$ and contracts $V_-$. Observe that 
\begin{itemize}
\item $BA^{-1}$ contracts both $V_-$ and $\mathbf{Q}_n^2$ 
\item $B^{-1}A^{-1}$ contracts both $V_+$ and $\mathbf{Q}_n^2$ 
\item $A^{-1}$ contracts both $V_+$ and $V_-$.
\end{itemize}

Thus, for every pair among $\mathbf{Q}_n^2$, $V_+$ and $V_-$, there is a common contraction of the acting group. This phenomenon is enough to prove that the Dehn function is quadratic, and Theorem \ref{gaga} is a consequence of a more general result, which is the object of the next section (and will be proved in the last section).

\begin{rem}
It was observed in \cite[Section~9]{C} that the asymptotic cone of $\BS(1,n)$ ($|n|\ge 2$) or $\SOL_3(\mathbf{K})$ is (for any choice of an ultrafilter) bilipschitz homeomorphic to the Diestel-Leader $\mathbf{R}$-graph $$\{(x,y)\in\mathbf{T}\times\mathbf{T}:b(x)+b(y)=0\},$$ where $\mathbf{T}$ is the universal complete $\mathbf{R}$-tree everywhere branched of degree $2^{\aleph_0}$ and $b$ any Busemann function on $\mathbf{T}$. The same reasoning shows that any asymptotic cone of $\Gamma_n$ is bilipschitz homeomorphic to $$\{(x,y,z)\in\mathbf{T}^3:b(x)+b(y)+b(z)=0\}$$
(which is also bilipschitz homeomorphic to the asymptotic cone of the $\SOL_5(\mathbf{K})$ for any local field $\mathbf{K}$, as follows from the final remarks in \cite[Section~9]{C}). 
\end{rem}


\section{A general result and further comments}\label{metaSection}

By {\it local field} we mean a non-discrete locally compact normed field, see \cite{Weil}.
Consider a semidirect product 
$$G=\bigoplus_{i=1}^m V_i\rtimes A,$$
where $A$ is a finitely generated abelian group of rank $d$ and where $V_i$ is $A$-invariant and is isomorphic to a product of local fields, on which $A$ acts by scalar multiplication.

\begin{thm}\label{meta}
 Assume that for every pair $i,j$, there exists an element of $A$ acting by strict contractions on both $V_i$ and $V_j$ (i.e.\ acting by multiplication by an element of norm $<1$ on each factor). Then $G$ is compactly presented with quadratic Dehn function if $d\ge 2$, and linear Dehn function if $d=1$.
\end{thm}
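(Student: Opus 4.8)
The plan is to establish the isoperimetric inequality directly: I will produce a compact generating set $S$ and a finite list of relators of bounded length such that every null-homotopic word of length $\le n$ has area $\le Cn^2$ (resp. $\le Cn$), which simultaneously yields compact presentability and the stated Dehn bound. Take $S=S_A\cup\bigcup_i B_i$, where $S_A$ generates $A$ and $B_i$ is a compact symmetric $0$-neighborhood in $V_i$. To each local field factor of $V_i$ attach its \emph{weight}, the homomorphism $A\to\mathbf{R}$ recording the logarithm of the norm of the scalar by which $A$ acts, and write $\Phi_i$ for the resulting set of weights of $V_i$; by the case $i=j$ of the hypothesis every weight is nonzero, and ``$a$ contracts $V_i$'' means precisely that every $\chi\in\Phi_i$ is negative on $a$. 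The candidate relators are the relations of $A$, the additive relations among elements of $B_i$ (so $B_i$ together with dilation by $S_A$ generates $V_i$), the commutations $[V_i,V_j]=1$ for $i\neq j$, and the scaling relations $ava^{-1}=\lambda\cdot v$ for $a\in S_A$, $v\in B_i$.

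Two preliminary reductions dispose of everything but the main case. The splitting $A\hookrightarrow G$ makes $A$ a retract, so the Dehn function of $G$ is bounded below by that of $A\cong\mathbf{Z}^d\times F$, which is quadratic when $d\ge2$; for $d=1$ linear is the absolute minimum. Moreover, when $d=1$ the pairwise hypothesis forces all weights to share a common sign (any two weights lie in some $\Phi_i\cup\Phi_j$ and are simultaneously negated by one power of a generator), so a single element of $A$ strictly contracts all of $\bigoplus_iV_i$; then $G$ is a contracting extension of Heintze type, hence Gromov hyperbolic, with linear Dehn function. This leaves the quadratic \emph{upper} bound for $d\ge2$ as the real content.

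For that bound I would fill a null-homotopic word $w$ of length $\le n$ in two stages. First project $w$ to a relation $\bar w$ in $A$, fill $\bar w$ with area $O(n^2)$ using the quadratic Dehn function of $\mathbf{Z}^d$, and lift each $2$-cell to a bounded relator of $G$; this reduces $w$, at cost $O(n^2)$, to a word all of whose prefixes project trivially to $A$, i.e.\ to a concatenation of fiber excursions $a^{-1}(\cdots)a$ that insert, transport, and ultimately cancel elements of the $V_i$.

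Filling these fiber excursions without the exponential blow-up afflicting $\BS(1,n)$ is the heart of the matter and the step I expect to be hardest. Using that distinct $V_i$ commute and that quadratic filling is a two-dimensional phenomenon, I would reduce to sub-loops each supported on a rank-$2$ subflat of $A$ together with at most two fiber families $V_i,V_j$, and fill each by the technical lemma of Section~\ref{TrickSection}. That lemma is where the hypothesis is consumed: the set $\{a:\chi(a)<0\ \forall\chi\in\Phi_i\cup\Phi_j\}$ is a nonempty open cone exactly because a common contraction of the pair $\{i,j\}$ is assumed, and it has full dimension $d\ge2$; one routes the two fiber elements through the flat into this common ``basement'', shrinks them into $B_i\cup B_j$, performs the combination there at bounded cost, and transports the result back. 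Since the log-norms occurring in $w$ are $O(n)$ and there are $O(n)$ such transport-and-merge operations, each of cost $O(n)$, the excursions fill with total area $O(n^2)$. The delicate point that Gromov's trick is designed to package is to show these merging rectangles assemble into one van Kampen diagram of area $O(n^2)$ rather than compounding multiplicatively; making this uniform over the configuration of weights is the crux.
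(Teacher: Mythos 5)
Your skeleton is right --- the generating set and relators, the retraction lower bound, and the key mechanism of merging two fiber families by conjugating into a common contracting direction (this is exactly the paper's Claims \ref{recon} and \ref{coqua}) --- but the quantitative heart of the proof is missing, and your cost accounting fails as stated. A null-homotopic word of length $n$ can contain $\Theta(n)$ fiber letters, so after the (free, zero-cost) rewriting $\prod_j t_jv_j=\bigl(\prod_j u_jv_ju_j^{-1}\bigr)u_R$ you face $\Theta(n)$ excursions, not $O(1)$. Each transport of a conjugated fiber letter into the ``basement'' costs $\Theta(n^2)$, not the $O(n)$ you claim: to route $u_jv_ju_j^{-1}$ through the contracting cone you must first rearrange the conjugator $u_j$ (a word of length $O(n)$ in $T$) into a dilate-then-contract normal form, and that rearrangement is a filling in $A\simeq\mathbf{Z}^d$, hence quadratic; there is no way to carry a fiber element along an arbitrary length-$n$ path in the flat at linear cost while keeping its norm bounded (its norm grows exponentially along any subpath leaving the cone --- this is precisely the $\BS(1,n)$ blow-up). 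With $\Theta(n)$ excursions at $\Theta(n^2)$ each, plus up to $\Theta(n^2)$ pairwise commutations at $\Theta(n^2)$ each, your scheme yields $O(n^4)$, not $O(n^2)$. You flag yourself that assembling the merging rectangles into a single $O(n^2)$ diagram ``is the crux'' --- and then you do not supply it; likewise the proposed reduction to ``sub-loops supported on rank-$2$ subflats'' is itself a nontrivial filling claim with no argument behind it, and your opening move (``lift the filling of the projection $\bar w$ to reduce $w$ to a word with trivially projecting prefixes'') is not a legal $O(n^2)$-cost operation, since pushing $T$-letters across fiber letters via the scaling relators can multiply the number of fiber letters exponentially.

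The missing idea, which is what Section~\ref{TrickSection} actually provides, is that one may assume the number of fiber excursions is \emph{bounded independently of $n$}. The paper first shows (Lemma \ref{feff}, using only one contraction per factor) that the set $\mathcal{F}$ of words $\bigl(\prod_{i=1}^m t_iv_it_i^{-1}\bigr)t$, with a \emph{single} letter $v_i$ per factor $\mathbf{K}_i$, is efficient: every element of length $n$ has such a representative of length $O(n)$, because a contraction lets one write any $x_i$ of norm $\le C^n$ as $t_iv_it_i^{-1}$ with $|t_i|=O(n)$. Gromov's trick (Lemmas \ref{astg} and \ref{lemf}, Proposition \ref{quadra}) --- cutting a loop into $k$ arcs, replacing chords by efficient words, and running an induction over scales --- then reduces the full Dehn function to $\delta_{\mathcal{F}[k]}$, i.e.\ to loops with at most $R=mk$ fiber letters. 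For those, $O(R^2)=O(1)$ commutations at cost $O(n^2)$ each (Claim \ref{coqua}, where the pairwise hypothesis enters via $s_{ij}^{Mn}$) and $R$ conjugations by $s_{ii}^{MRn}$ at cost $O(n^2)$ each give the quadratic bound; this is where your per-pair merging idea is correct, but it only closes because $R$ is a constant. Note also that Proposition \ref{quadra} requires $\zeta>1$, which is why the paper's $d=1$ case runs the same argument with $\zeta=3/2$ and finishes with Bowditch's subquadratic-implies-linear theorem; your alternative for $d=1$ (all weights share a sign, so $G$ is a contracting $\mathbf{Z}$-extension, hence hyperbolic with linear Dehn function) is correct in substance but rests on the unproved --- and nontrivial in the locally compact setting --- assertion that such contracting extensions are hyperbolic.
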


\begin{rem}
The consideration of common contractions is ubiquitous in this context, see for instance \cite{BS,A}.
\end{rem}

\begin{exe}\label{Baumslag group}
Let $\mathbf{K}$ be any local field. Let $\SOL_{2d-1}(\mathbf{K})$ be the semidirect product of $\mathbf{K}^d$ by the set of diagonal matrices with determinant of norm one. It has a cocompact subgroup of the form $\mathbf{K}^d\rtimes\mathbf{Z}^{d-1}$, obtained explicitly by reducing to matrices whose diagonal entries are all powers of a given element of $\mathbf{K}$ of norm $\neq 1$, so we can apply Theorem \ref{meta}, which yields that $\SOL_{2d-1}(\mathbf{K})$ has quadratic Dehn function whenever $2d-1\ge 5$. This was proved by Gromov when $\mathbf{K}=\mathbf{R}$ \cite[5.$\textnormal{A}_9$]{Gromov}. As a further application, let $p$ be prime and consider the finitely presented group
$$\Lambda_p=\langle a,s,t\,|\;\; a^p,\;[s,t],\;[a^t,a],\;a^s=a^ta\rangle,$$ which was introduced by Baumslag \cite{Bau} as a finitely presented metabelian group containing a copy of the lamplighter group $(\mathbf{Z}/p\mathbf{Z})\wr\mathbf{Z}$. 
Let $\mathbf{F}_p\lp t\rp$ denote the field of Laurent series over the finite field $\mathbf{F}_p$. The following proposition is very standard, but we have no reference for a complete proof. As a consequence we deduce that $\Lambda_p$ has quadratic Dehn function.
\begin{prop}\label{latp}
For any prime $p$, the group $\Lambda_p$
embeds as a cocompact lattice into $\SOL_5(\mathbf{F}_p\lp u\rp)$.
\end{prop}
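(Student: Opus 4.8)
The plan is to make $\Lambda_p$ concrete as a semidirect product and then recognize it as an $S$-arithmetic lattice. First I would read off the algebraic structure from the presentation. In the abelianization the relation $a^s=a^ta$ forces $a=1$, so $\langle s,t\rangle\cong\mathbf{Z}^2$ and the derived subgroup is the normal closure $N$ of $a$. The relations $a^p=1$, $[a^t,a]=1$ and $a^s=a^ta$ are exactly what is needed to see that $N$ is abelian (so $\Lambda_p$ is metabelian) and to describe it as a module over $\mathbf{F}_p[s^{\pm1},t^{\pm1}]$, the coefficients lying in $\mathbf{F}_p$ because of $a^p$. Writing the module additively, $a^s=a^ta$ says that $s$ acts as $t+1$, i.e.\ $(s-t-1)\cdot a=0$, so
$$N\cong \mathbf{F}_p[s^{\pm1},t^{\pm1}]/(s-t-1)\cong \mathbf{F}_p[t,t^{-1},(t+1)^{-1}],$$
with $s$ acting by multiplication by $t+1$ and $t$ by multiplication by $t$. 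Hence $\Lambda_p\cong M\rtimes\mathbf{Z}^2$ with $M:=\mathbf{F}_p[t,t^{-1},(t+1)^{-1}]$.

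The key observation is that $M$ is precisely the ring of $S$-integers of the rational function field $\mathbf{F}_p(t)$ for the set $S=\{0,-1,\infty\}$, that is, the functions on $\mathbf{P}^1_{\mathbf{F}_p}$ regular away from these three $\mathbf{F}_p$-rational places. The three completions $F_0=\mathbf{F}_p\lp t\rp$, $F_{-1}=\mathbf{F}_p\lp t+1\rp$ and $F_\infty=\mathbf{F}_p\lp 1/t\rp$ are each isomorphic to $\mathbf{K}:=\mathbf{F}_p\lp u\rp$, and the diagonal embedding $M\hookrightarrow F_0\times F_{-1}\times F_\infty\cong\mathbf{K}^3$ is a discrete cocompact subgroup. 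This is the function-field counterpart of the fact used above for $\BS(1,n)$, that $\mathbf{Z}[1/n]$ is cocompact in $\mathbf{Q}_n\oplus\mathbf{R}$; here it is the Riemann--Roch / strong approximation statement on $\mathbf{P}^1$.

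It remains to match the $\mathbf{Z}^2$-action with the diagonal torus $T$ of $\SOL_5(\mathbf{K})$ (diagonal $3\times 3$ matrices of determinant of norm one). Under the identifications above, multiplication by $t$ scales the three factors by the images of $t$ in $F_0,F_{-1},F_\infty$, namely by $\mathrm{diag}(u,\,-1+u,\,u^{-1})$, whose determinant $-1+u$ has norm one; likewise multiplication by $t+1$ gives $\mathrm{diag}(1+u,\,u,\,u^{-1}(1+u))$, of determinant $(1+u)^2$ of norm one (this is the product formula). Thus $\langle s,t\rangle$ lands in $T$, and the valuation map $T\to\{(n_i)\in\mathbf{Z}^3:\sum_i n_i=0\}\cong\mathbf{Z}^2$ sends $t\mapsto(1,0,-1)$ and $t+1\mapsto(0,1,-1)$, which form a basis. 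Hence $\langle s,t\rangle$ meets every coset of the compact subgroup $T^0$ of diagonal matrices with all entries of norm one exactly once, so it is a cocompact lattice in $T$. Gluing the two cocompact lattices via the compatible semidirect product structure (the action of $\langle s,t\rangle$ preserves $M$), we conclude that $\Lambda_p=M\rtimes\mathbf{Z}^2$ is a cocompact lattice in $\mathbf{K}^3\rtimes T=\SOL_5(\mathbf{K})$.

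I expect the main obstacle to be twofold. The first is the faithful reading of the presentation: one must show that the derived subgroup is abelian and is \emph{exactly} the module $\mathbf{F}_p[t,t^{-1},(t+1)^{-1}]$ with no further collapse, so that the natural map $\Lambda_p\to M\rtimes\mathbf{Z}^2$ is an isomorphism rather than merely a surjection; this is Baumslag's classical computation and is needed to guarantee that our homomorphism into $\SOL_5(\mathbf{K})$ is injective. The second, analytic, ingredient is the cocompactness of $M$ in $\mathbf{K}^3$, which rests on the Riemann--Roch / strong approximation input on $\mathbf{P}^1$. Once these are in place, identifying the torus elements and verifying that $\langle s,t\rangle$ is a lattice in $T$ is routine bookkeeping with valuations and the product formula.
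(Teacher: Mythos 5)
Your proposal is correct and follows essentially the same route as the paper's sketch: identify $\Lambda_p\cong\mathbf{F}_p[t,t^{-1},(1+t)^{-1}]\rtimes\mathbf{Z}^2$ via Baumslag's computation, embed the ring diagonally into the three completions at the places $0,-1,\infty$ (your matrices $\mathrm{diag}(u,-1+u,u^{-1})$ and $\mathrm{diag}(1+u,u,u^{-1}(1+u))$ are, up to reordering the coordinates, exactly the paper's $(u,u^{-1},u-1)$ and $(u+1,u^{-1}+1,u)$), and check that $\mathbf{Z}^2$ is a cocompact lattice in the norm-one diagonal torus. Your Riemann--Roch/strong-approximation justification of cocompactness and the explicit valuation-map argument for the torus merely fill in steps the paper delegates to references (\cite{LSV}, \cite{Ha}) or declares ``readily seen.''
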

\begin{proof}[Proof (sketched)]
Let $\mathbf{F}_p[X,X^{-1}]$ be the ring of Laurent polynomials over $\mathbf{F}_p$. Consider the group 
$$\Omega_p=\mathbf{F}_p[X,X^{-1},(1+X)^{-1}]\rtimes\mathbf{Z}^2,$$
where the generators of $\mathbf{Z}^2$ act by multiplication by $X$ and $1+X$ respectively. 
We have an obvious homomorphism $\Lambda_p\to\Omega_p$ mapping $a$ to the unit element of the ring $\mathbf{F}_p[X,X^{-1},(1+X)^{-1}]$ and $(t,s)$ to the canonical basis of $\mathbf{Z}^2$; it is essentially contained in Baumslag's proof \cite{Bau} that this is an isomorphism.

Consider the embedding
\begin{eqnarray*}\sigma:\mathbf{F}_p[X,X^{-1},(1+X)^{-1}]\to\mathbf{F}_p\lp u\rp^3\\
(P_1,P_2,P_3)\mapsto (P_1(u),P_2(u^{-1}),P_3(u-1))
\end{eqnarray*}
This is an embedding as a cocompact lattice: this can be checked by hand (see \cite[Proof of Prop.~3.4]{LSV}), or follows from general results~\cite{Ha}.

If we make the two generators of $\mathbf{Z}^2$ act on $\mathbf{F}_p\lp u\rp^3$ by the diagonal matrices
$$(u,u^{-1},u-1)\quad\text{and}\quad(u+1,u^{-1}+1,u),$$
this makes $\sigma$ a $\mathbf{Z}^2$-equivariant embedding. Moreover, denoting by $D^1_3(\mathbf{K})$ the group of $3\times 3$ matrices with determinant of norm one, it is readily seen that this embedding of $\mathbf{Z}^2$ into $D^1_3(\mathbf{F}_p\lp u\rp)$ has discrete and cocompact image. Therefore the embedding $\sigma$ extends to a discrete cocompact embedding of $\Omega_p$ into 
$$\mathbf{F}_p\lp u\rp^3\rtimes D^1_3(\mathbf{K})=\SOL_5(\mathbf{F}_p\lp u\rp).\qedhere$$
\end{proof}

\begin{rem}
If $n\ge 2$ is not prime, replacing, in the argument, $\mathbf{F}_p\lp u\rp$ by $(\mathbf{Z}/n\mathbf{Z})\lp u\rp$, we readily obtain that $\Lambda_n$ has quadratic Dehn function as well. However, the argument does not apply for $n=0$, because $\Lambda_0$ contains a copy of the wreath product $\mathbf{Z}\wr\mathbf{Z}$ and therefore does not stand as a discrete linear group over a product of local fields, and actually Kassabov and Riley \cite{KR} proved that $\Lambda_0$ has exponential Dehn function. 
\end{rem}
\end{exe}

\begin{rem}It is natural to ask whether there is a group with some embedding into a discrete group with polynomial Dehn function, but not into one with quadratic Dehn function. The answer is positive, as M.~Sapir showed to the authors: consider a problem in {\sc ntime}($n^3$) (that is, solvable in cubic time by a non-deterministic Turing machine), but not in {\sc ntime}($n^2$). Such problems exist by \cite[p.~76]{hcs} or \cite[p.~69-70]{arba}. By \cite{BORS}, there exists a finitely presented group $\Gamma$ with word problem in {\sc ntime}($n^3$) but not {\sc ntime}($n^2$), and again by \cite{BORS} there exists a finitely presented group $\Lambda$ with polynomial Dehn function containing $\Gamma$ as a subgroup. However, $\Gamma$ cannot be embedded into a finitely presented group with quadratic Dehn function, since otherwise, using \cite[Theorem~1.1]{Sab} its word problem would be in {\sc ntime}($n^2$). Still, it would be interesting to have an example of more geometrical nature.\end{rem}


\section{Reduction to special words}\label{TrickSection}

In this section, we prove Proposition \ref{quadra}, which reduces the computation of the Dehn function to its computation for words of a special form. The basic idea is due to Gromov \cite[p.~86]{Gromov}.

\begin{lem}\label{lemf}
Let $u_k:\mathbf{R}_{>0}\to\mathbf{R}_{>0}$ be a family of functions, indexed by integers $k\ge 1$, satisfying $\underline{\lim}_{x\to \infty} u_k(x)> 0$ for all $k$. Assume that
$$\frac{zu_k(y)}{u_k(yz)}\underset{z\to\infty}{\longrightarrow}0\;\text{uniformly in }y\ge 1,k\ge 1.$$

Consider a function $f:\mathbf{R}_{>0}\to\mathbf{R}_{>0}$, locally bounded
and positive constants $c_1,c_2,x_0$, such that for all $k\ge 1,x\ge x_0$
$$f(x)\le u_k(x)+c_1kf(c_2x/k).$$
Then, for some constants $A,x_1$ and some $k\ge 1$, we have $$f(x)\le Au_k(x),\quad\forall x\ge x_1.$$
\end{lem}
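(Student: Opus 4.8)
The plan is to iterate the recursive inequality $f(x)\le u_k(x)+c_1 k\,f(c_2x/k)$ and to choose the index $k$ so large that the multiplicative weight accumulated along the iteration sums to a convergent geometric series. Writing $\lambda=c_2/k$ and dividing the recursion by $u_k(x)$, the natural quantity to control is the ratio
$$\theta(x)=c_1 k\,\frac{u_k(c_2x/k)}{u_k(x)},$$
since then $f(x)/u_k(x)\le 1+\theta(x)\,f(\lambda x)/u_k(\lambda x)$. If I can arrange $\theta\le 1/2$ on the relevant range, a single geometric estimate plus control of the innermost remainder will finish the argument.

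The first step is to produce such a $k$ from the uniform convergence hypothesis. Setting $y=c_2x/k$ and $z=k/c_2$, so that $yz=x$, I rewrite $\theta(x)=c_1c_2\cdot\dfrac{z\,u_k(y)}{u_k(yz)}$. As $k\to\infty$ one has $z\to\infty$, and the hypothesis gives $\dfrac{z\,u_k(y)}{u_k(yz)}\to 0$ uniformly in $y\ge 1$ and in the index. Choosing $\varepsilon=(2c_1c_2)^{-1}$ in the definition of uniform convergence furnishes a $Z$, and then any $k$ with $k/c_2\ge Z$ and $k>c_2$ (so that $\lambda<1$) yields $\theta(x)\le 1/2$ for every $x\ge k/c_2$, because the constraint $y=c_2x/k\ge 1$ is exactly $x\ge k/c_2$. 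I fix one such $k$ from now on.

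With $k$ fixed, I set the threshold $x_1=\max(x_0,\,k/c_2,\,kX_k/c_2)$, where $X_k$ is chosen from the hypothesis $\underline{\lim}\,u_k>0$ so that $u_k\ge m_0:=\tfrac12\,\underline{\lim}_{x\to\infty}u_k(x)>0$ on $[X_k,\infty)$. For $x\ge x_1$, let $N$ be the least integer with $\lambda^N x<x_1$; then $\lambda^j x\ge x_1\ge k/c_2$ for $j\le N-1$, so $\theta(\lambda^j x)\le 1/2$ there, while the landing point satisfies $\lambda^N x\in[\lambda x_1,x_1)\subseteq[X_k,\infty)$. Unfolding the recursion $N$ times gives
$$f(x)\le\sum_{j=0}^{N-1}(c_1k)^j u_k(\lambda^j x)+(c_1k)^N f(\lambda^N x).$$
The inequality $c_1 k\,u_k(\lambda^{j+1}x)\le\tfrac12 u_k(\lambda^j x)$ (valid for $j\le N-1$) yields both $(c_1k)^j u_k(\lambda^j x)\le 2^{-j}u_k(x)$, so the sum is $<2u_k(x)$, and $u_k(x)\ge(2c_1k)^N u_k(\lambda^N x)$, whence $(c_1k)^N\le u_k(x)/(2^N u_k(\lambda^N x))$.

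The main obstacle is the remainder term, and this is exactly where local boundedness of $f$ and the $\underline{\lim}$ hypothesis enter. On the fixed bounded interval $[\lambda x_1,x_1]$ the function $f$ is bounded by some $M<\infty$, while $u_k(\lambda^N x)\ge m_0$ because $\lambda^N x\ge X_k$; hence
$$(c_1k)^N f(\lambda^N x)\le\frac{M\,u_k(x)}{2^N u_k(\lambda^N x)}\le\frac{M}{m_0}\,u_k(x).$$
Combining, $f(x)\le(2+M/m_0)\,u_k(x)$ for all $x\ge x_1$, which is the desired conclusion with $A=2+M/m_0$. The delicate point throughout is the coupling $z=k/c_2$ in the first step: the same $k$ must serve both as the index of $u_k$ and as the driver $z\to\infty$ of the convergence, so the uniformity in the index built into the hypothesis is what makes the choice of $k$ possible.
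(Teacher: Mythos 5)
Your proof is correct and takes essentially the same route as the paper: your choice of $k$ via the uniformity of the hypothesis with $z=k/c_2$ and $\varepsilon=(2c_1c_2)^{-1}$ (the paper's $\eta$) yields exactly the paper's one-step estimate $c_1k\,u_k(c_2x/k)\le\tfrac12\,u_k(x)$ for $x\ge k/c_2$, and you invoke local boundedness of $f$ together with the $\underline{\lim}$ hypothesis on a fixed compact window near $x_1$, just as the paper does. The only difference is bookkeeping --- the paper propagates $f\le Au_k$ outward by induction on the scales $[x_1,\varepsilon^{-n}x_1]$, absorbing the factor $\tfrac12$ into $A\ge 2$, whereas you unroll the recursion into a geometric series plus the remainder $(c_1k)^N f(\lambda^N x)$, which you control by the same two ingredients (a cosmetic point: if $\underline{\lim}_{x\to\infty}u_k(x)=+\infty$, replace your $m_0=\tfrac12\underline{\lim}u_k$ by any finite $m_0>0$ with $u_k\ge m_0$ eventually, which the hypothesis provides).
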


For example, $u_k(x)=a_kx^\alpha$, for $\alpha>1$ and arbitrary constants $a_k>0$, satisfy the assumption.

\begin{proof}
Set $\eta=1/(2c_1c_2)$. There exists, by the assumption, $\eps_0>0$ (we choose $\eps_0\le 1/2$) such that for all $z\ge 1/\eps_0$, $y,k\ge 1$ we have $\frac{zu_k(y)}{u_k(yz)}\le\eta$. Therefore, for all $x,\eps>0$ such that $x\eps\ge 1$ and $\eps\le\eps_0$ we have $\frac{u_k(x\eps)}{\eps u_k(x)}\le\eta$ (as we check by setting $y=x\eps$ and $z=\eps^{-1}$).
Taking $\eps=c_2/k$, we get, for $k\ge c_2/\eps_0$ and for all $x\ge\eps^{-1}$
$$u_k\left(\frac{c_2x}{k}\right)\le\frac{c_2\eta}{k}u_k(x).$$
We now fix $k\ge c_2/\eps_0$ (so $\eps$ is fixed as well and $\eps\le\eps_0\le 1/2$).
We let $x_1\geq \max(\eps^{-1},x_0k)$ be large enough so that $\inf_{x\ge x_1}u_k(x)>0$.
Therefore, since $f$ is locally bounded, there exists $A\ge 2$ such that for all $x\in [x_1,\eps^{-1}x_1]$ we have $f(x)\le Au_k(x)$. 

Now let us prove that $f(x)\le Au_k(x)$ for all $x\ge x_1$, showing by induction on $n\ge 1$ that 
$f(x)\le Au_k(x)$ for all $x\in [x_1,\eps^{-n}x_1]$. It already holds for $n=1$; suppose that the induction is proved until $n-1\ge 1$. Take $x\in [\eps^{1-n}x_1,\eps^{-n}x_1]$. By induction hypothesis, we have $f(x')\le Au_k(x')$ for all $x'\in[x_1,\eps x]$.
We have
\begin{align*}
f(x) & \le u_k(x)  +c_1kf(c_2x/k)\\
 & \le u_k(x)+c_1kAu_k(c_2x/k)\\
& \le u_k(x)(1 + c_1Ac_2\eta)\\ &\le u_k(x)(1+A/2)\quad\le Au_k(x).\qedhere
 \end{align*}
\end{proof}

If $x$ is real, we denote by $\lfloor x\rfloor=\sup (]-\infty,x]\cap\mathbf{Z})$ and $\lceil x\rceil=\inf([x,+\infty[\cap\mathbf{Z})$ its lower and upper integer parts. 

Let $G$ be a locally compact group generated by a compact symmetric subset $S$. Let $F_S$ be the nonabelian free group over $S$. For $w,w'\in F_S$, we write $w\equiv w'$ if $w$ and $w'$ represent the same element of $G$.

Let $\mathcal{F}$ be a set of words in $S$. We write $\mathcal{F}[k]$ the set of words obtained as the concatenation of $\le k$ words of $\mathcal{F}$. We can define the restricted Dehn function
$\delta_\mathcal{F}(n)$ as the supremum of areas of null-homotopic words in $\mathcal{F}$ (say $\sup\emptyset$=0).
We say that $\mathcal{F}$ is {\it efficient} if there exists a constant $C$ such that for every word $w$ in $S$, there exists $w'\in\mathcal{F}$ such that $w'\equiv w$ and $|w'|_S\le C|w|_S$. Also, if $x$ is a nonnegative real number, $\delta(x)$ can obviously be defined as the supremum of areas of loops of length $\le x$ (so $\delta(x)=\delta(\lfloor x\rfloor)$).

\begin{lem}\label{astg}
Suppose that $\mathcal{F}$ is efficient. Then for any $k\in\mathbf{Z}_{>0}$ and $n\in\mathbf{R}_{>0}$ we have
$$\delta(n)\le k\delta\left((C+1)\left\lceil \frac nk\right\rceil\right)+\delta_{\mathcal{F}[k]}\left(Ck\left\lceil \frac nk\right\rceil\right);$$
in particular for $n\ge k$ we have
$$\delta(n)\le k\delta\left((C+1)\frac {2n}k\right)+\delta_{\mathcal{F}[k]}(2Cn).$$
\end{lem}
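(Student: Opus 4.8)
The plan is to take an arbitrary null-homotopic word $w$ with $|w|_S\le n$, cut it into $k$ consecutive blocks, replace each block by an efficient representative lying in $\mathcal{F}$, and separately account for the two sources of area: the cost of each local replacement (controlled by the ordinary Dehn function $\delta$) and the area of the edited word, which now lies in $\mathcal{F}[k]$ (controlled by $\delta_{\mathcal{F}[k]}$). Concretely, I would first write $w=w_1w_2\cdots w_k$ as a concatenation of $k$ subwords, each of length $\le\lceil n/k\rceil$; this is possible since $|w|_S\le n$ (some blocks may be empty, which is harmless). Using efficiency, for each $i$ I pick $w_i'\in\mathcal{F}$ with $w_i'\equiv w_i$ and $|w_i'|_S\le C|w_i|_S\le C\lceil n/k\rceil$, and set $w'=w_1'w_2'\cdots w_k'$. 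Then $w'\in\mathcal{F}[k]$, it satisfies $w'\equiv w$ (so $w'$ is null-homotopic), and $|w'|_S\le Ck\lceil n/k\rceil$.

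The key algebraic step is a telescoping identity in the free group $F_S$. Writing $r_i=w_i(w_i')^{-1}$, each $r_i$ is null-homotopic of length $\le(C+1)\lceil n/k\rceil$, and with the conjugating prefixes $g_i=w_1'w_2'\cdots w_{i-1}'$ (so $g_1$ is trivial) one checks by an immediate induction, inserting $(w_i')^{-1}w_i'$ one block at a time, that
$$w=\left(\prod_{i=1}^{k} g_i\, r_i\, g_i^{-1}\right)w'.$$
Since area is subadditive under concatenation and invariant under conjugation, the prefixes $g_i$ do not enter the estimate, and one obtains
$$\text{area}(w)\le\sum_{i=1}^{k}\text{area}(r_i)+\text{area}(w')\le k\,\delta\!\left((C+1)\left\lceil\tfrac{n}{k}\right\rceil\right)+\delta_{\mathcal{F}[k]}\!\left(Ck\left\lceil\tfrac{n}{k}\right\rceil\right),$$
using $\text{area}(r_i)\le\delta(|r_i|_S)$ and $\text{area}(w')\le\delta_{\mathcal{F}[k]}(|w'|_S)$ together with monotonicity of $\delta$ and $\delta_{\mathcal{F}[k]}$. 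Taking the supremum over all null-homotopic $w$ with $|w|_S\le n$ yields the first inequality.

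For the ``in particular'' clause I would simply note that $n\ge k$ forces $n/k\ge 1$, hence $\lceil n/k\rceil\le n/k+1\le 2n/k$; substituting this bound into the arguments of $\delta$ and $\delta_{\mathcal{F}[k]}$ (again using monotonicity) gives $(C+1)\lceil n/k\rceil\le(C+1)\tfrac{2n}{k}$ and $Ck\lceil n/k\rceil\le 2Cn$, which is exactly the stated refinement.

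The only genuinely delicate point is the telescoping identity and the claim that the conjugations by $g_i$ cost nothing: this is precisely the invariance of area under conjugation, so no new length is introduced and the $g_i$ can be chosen arbitrarily long without affecting the count. The remaining ingredients — subadditivity of area under concatenation, and the elementary inequality $\lceil n/k\rceil\le 2n/k$ for $n\ge k$ — are routine, and the treatment of empty blocks when $|w|_S<k$ is a harmless technicality (such blocks contribute trivially).
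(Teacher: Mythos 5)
Your proof is correct and is essentially the paper's own argument: the paper cuts the loop into $k$ segments $b_i$ of length $\le\lceil n/k\rceil$, replaces each by an efficient word $b_i'\in\mathcal{F}$, and fills the $k$ small loops $b_i'b_i^{-1}$ by $\delta$ and the outer loop $b_1'\cdots b_k'\in\mathcal{F}[k]$ by $\delta_{\mathcal{F}[k]}$, which is exactly your decomposition with $w_i=b_i$ and $r_i$ the small loops. Your telescoping identity in $F_S$ merely makes explicit, algebraically, the conjugation-invariance that the paper's geometric ``cut the loop'' picture uses implicitly, and the handling of the ``in particular'' clause via $\lceil n/k\rceil\le 2n/k$ for $n\ge k$ matches as well.
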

\begin{proof}
Consider a loop $\gamma$ of length $\le n$ and cut it into $k$ segments $[a_i,a_{i+1}]$ of length $\le \lceil n/k\rceil$ (see Figure \ref{fig:theFig}). Set $b_i=a_i^{-1}a_{i+1}$; there exists $b'_i\equiv b_i$ with $b'_i\in\mathcal{F}$ and $b'_i\le C\lceil n/k\rceil$. Set $\gamma_i=b'_ib_i^{-1}$, so $\gamma_i$ is null-homotopic.

\begin{figure}[!t]
\includegraphics[scale=0.5]{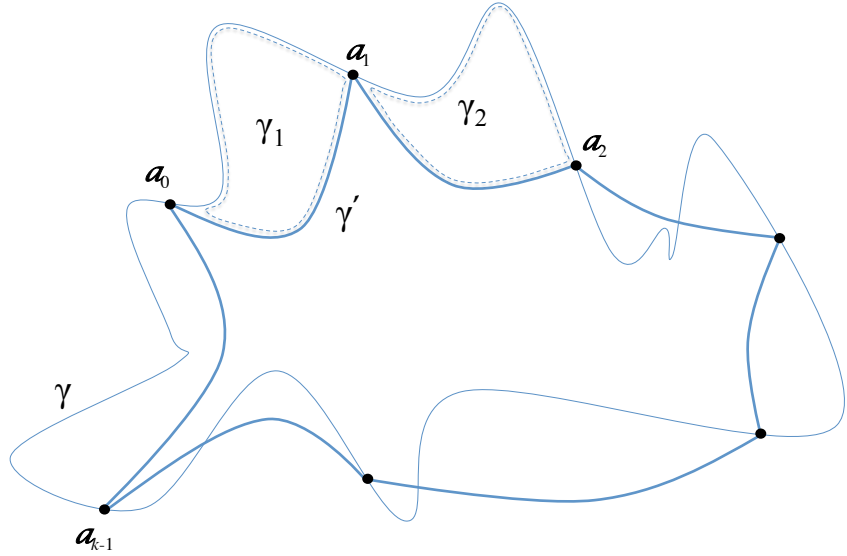}
\caption{The path $\gamma$ cut into $k$ segments.}
\label{fig:theFig}
\end{figure}

Thus the loop $\gamma$ has been decomposed into $k$ loops $\gamma_1,\dots,\gamma_k$ of length $\le (C+1)\lceil n/k\rceil$ and the loop $\gamma'$ defined by the word $b'_1\dots b'_k$, of length $\le Ck\lceil n/k\rceil$, lying in $\mathcal{F}[k]$. Accordingly
\begin{align*}
\textnormal{area}(\gamma) & \le \sum_{i=1}^k\textnormal{area}(\gamma_i)+\textnormal{area}(\gamma')\\
 & \le k\delta((C+1)\lceil n/k\rceil)+\delta_{\mathcal{F}[k]}(Ck\lceil n/k\rceil).\qedhere
\end{align*}
\end{proof}

\begin{prop}\label{quadra}
Suppose that $\mathcal{F}$ is efficient. Suppose that for some $\zeta>1$, for all $k$, there is a constant $a_k$ such that we have $\delta_{\mathcal{F}[k]}(n)\le a_kn^\zeta$ for all $n$. Then there exists a constant $C'$ such that $\delta(n)\le C'n^\zeta$ for all $n$.
\end{prop}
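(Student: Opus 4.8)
The plan is to feed the recursion of Lemma~\ref{astg} into the analytic Lemma~\ref{lemf}, taking $f=\delta$. Indeed, by the ``in particular'' clause of Lemma~\ref{astg}, for every integer $k\ge 1$ and every $n\ge k$,
$$\delta(n)\le k\,\delta\!\left(\frac{2(C+1)n}{k}\right)+\delta_{\mathcal{F}[k]}(2Cn)\le k\,\delta\!\left(\frac{2(C+1)n}{k}\right)+a_k(2C)^\zeta n^\zeta,$$
where the last inequality uses the hypothesis $\delta_{\mathcal{F}[k]}(m)\le a_k m^\zeta$. This is precisely the inequality required by Lemma~\ref{lemf}, with $c_1=1$, $c_2=2(C+1)$ and $u_k(x)=a_k(2C)^\zeta x^\zeta$. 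As $\zeta>1$, each $u_k$ has the form (a positive constant)$\,\cdot x^\zeta$, so by the worked example following Lemma~\ref{lemf} the family $(u_k)$ satisfies the two standing assumptions there. It then remains to verify the hypotheses on $f=\delta$ and to read off the conclusion.

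The one genuine point is that Lemma~\ref{lemf} requires $f=\delta$ to be finite-valued and locally bounded, whereas a priori $\delta$ might be $+\infty$. I would establish finiteness as follows. Let $K$ be the relator-length bound of the presentation; enlarging $K$ if necessary (which only decreases areas, hence preserves the hypothesis), we may assume $K\ge C+1$, and then any null-homotopic word of length $\le C+1$ is a relation of length $\le K$, hence has area $\le 1$, i.e.\ $\delta(x)\le 1$ for $x\le C+1$. Applying the general form of Lemma~\ref{astg} with cutting parameter $k=\lceil n\rceil$, so that $\lceil n/k\rceil=1$, gives
$$\delta(n)\le \lceil n\rceil\,\delta(C+1)+\delta_{\mathcal{F}[\lceil n\rceil]}(C\lceil n\rceil)\le \lceil n\rceil+a_{\lceil n\rceil}(C\lceil n\rceil)^\zeta<\infty.$$
Thus $\delta$ is finite everywhere, and being non-decreasing it is locally bounded (one may replace $\delta$ by $\max(\delta,1)$ if strictly positive values are wanted).

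A harmless mismatch remains, in that Lemma~\ref{astg} supplies the recursion only for $x\ge k$ while Lemma~\ref{lemf} is phrased for $x\ge x_0$; but the proof of Lemma~\ref{lemf} fixes a single $k$ and uses the recursion only for $x\ge x_1\ge x_0k$, so with $x_0=1$ one only ever needs it for $x\ge k$, exactly what we have. Lemma~\ref{lemf} then yields constants $A,x_1$ and an integer $k$ with $\delta(x)\le A\,u_k(x)=A\,a_k(2C)^\zeta x^\zeta$ for all $x\ge x_1$. For $1\le x\le x_1$ we have $\delta(x)\le\delta(x_1)\le\delta(x_1)\,x^\zeta$ by monotonicity (and $\delta(x)=0$ for $x<1$), so $C'=\max\{A\,a_k(2C)^\zeta,\ \delta(x_1)\}$ gives $\delta(n)\le C'n^\zeta$ for all $n$. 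I expect the finiteness and local boundedness of $\delta$ to be the only step needing thought, since Lemma~\ref{lemf} assumes it rather than proving it; everything else is a direct substitution into the two preceding lemmas.
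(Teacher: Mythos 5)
Your proof is correct and follows the paper's own route exactly: the paper likewise sets $u_k(x)=a_k(2Cx)^\zeta$, feeds the ``in particular'' inequality of Lemma~\ref{astg} into Lemma~\ref{lemf} with $f=\delta$, $c_1=1$, $c_2=2(C+1)$, and concludes. Your supplementary verifications --- finiteness and local boundedness of $\delta$ (via cutting with $k=\lceil n\rceil$ after enlarging the relator-length bound, which only decreases areas) and the observation that Lemma~\ref{lemf} only uses the recursion for $x\ge x_0k$, matching the $n\ge k$ restriction in Lemma~\ref{astg} --- are points the paper passes over silently (``for all large $n$''), and they check out.
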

\begin{proof}
Set $u_k(x)=a_k(2Cx)^\zeta$, where $C$ is given by the definition of efficiency of $\mathcal{F}$. By Lemma \ref{astg}, we have
$$\delta(n)\le u_k(n)+k\delta\left(\frac{(C+1)2n}{k}\right)$$
for all large $n$. The conclusion then follows from Lemma \ref{lemf}.
\end{proof}

\begin{rem}
If $C$ is the constant given in the definition of efficiency, we have the following general inequality
$$\delta_{\mathcal{F}[k]}(n)\le k\delta_{\mathcal{F}[3]}((2C+1)n).$$
Indeed, take a loop in $\mathcal{F}[k]$ of size $n$, defined by a word $u_1\dots u_k$. The endpoint of the path $u_1\dots u_i$ is at distance at most $n/2$ from the origin, and can therefore be joined to the origin by a path $s_i$ in $\mathcal{F}$ of size $\le Cn/2$. The loop $\mu_i$ defined by $s_{i-1}$, $s_{i}$ and $u_i$ lies in $\mathcal{F}[3]$, and the original loop is filled by the $k$ loops $\mu_1,\dots,\mu_k$, yielding the inequality.

In particular, in the hypotheses of Proposition \ref{quadra} it is enough to check the case $k=3$.
\end{rem}


\section{Proof of Theorem \ref{meta}}\label{ProofSection}

Let us turn back to the group $G$ of Theorem \ref{meta}. Refining the decomposition $\bigoplus V_i$ if necessary, we can suppose that each $V_i$ is a local field $\mathbf{K}_i$ on which $A\simeq\mathbf{Z}^d$ acts by scalar multiplication. Fix a multiplicative norm on each $\mathbf{K}_i$, and let $S_i$ be the one-ball in $\mathbf{K}_i$. Let $T$ be a symmetric generating set in $A$, and let $|\cdot|$ denote the word length in $A$ with respect to~$T$. Let $\mathcal{F}$ denote the set of words of the form $$\left(\prod_{i=1}^mt_iv_it_i^{-1}\right)t,$$ where $v_i\in S_i$ and $t_i,t$ are words in the letters of $T$.

\begin{lem}\label{feff}
Let $G=\bigoplus_{i=1}^m\mathbf{K}_i\rtimes A$ be a group as in Theorem \ref{meta}, with each $\mathbf{K}_i$ a local field with an action by scalar multiplication. Replace the last assumption (on pairs $(i,j)$) by the weaker assumption that for every $i$ there exists an element of $A$ acting on $\mathbf{K}_i$ by contractions.  Then $\mathcal{F}$ is efficient.
\end{lem}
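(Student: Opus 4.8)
The plan is to show that any word $w$ in $S=T\cup\bigcup_iS_i$ can be replaced, at linear cost, by a word of the prescribed shape $\bigl(\prod_it_iv_it_i^{-1}\bigr)t$. Write $g\in G$ for the element represented by $w$, and decompose $g=(x,a)$ with $x=(x_1,\dots,x_m)\in\bigoplus_i\mathbf{K}_i$ and $a\in A$. A word in $\mathcal{F}$ represents exactly the element $\bigl(\sum_i(t_i\cdot v_i),\,t\bigr)$, since conjugating $v_i\in\mathbf{K}_i$ by $t_i\in A$ produces the scalar multiple $t_i\cdot v_i$ in the same factor and annihilates the other factors, while the trailing $t$ records the $A$-component. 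Hence it suffices to produce, for each $i$, a word $t_i$ in $T$ and a unit-ball element $v_i\in S_i$ with $t_i\cdot v_i=x_i$, together with a word $t$ representing $a$, all of length $O(|w|_S)$.

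First I would bound the size of the coordinates $x_i$. Reading $w$ letter by letter and tracking the partial products $(x^{(j)},a^{(j)})$, each letter from $T$ scales the accumulated coordinate by a scalar of norm at most some fixed $M$ (the maximal norm by which a generator of $A$ acts on any $\mathbf{K}_i$), and each letter from $S_i$ adds, after scaling by the current $A$-position $a^{(j-1)}$, an element of norm $\le 1$. Since $|a^{(j)}|\le j$, each such contribution has norm at most $M^{\,j}\le M^{\,|w|_S}$; summing these (archimedean case) or taking their maximum (non-archimedean case) yields the a priori bound $|x_i|\le C_0M^{\,|w|_S}$, so that $\log|x_i|=O(|w|_S)$ uniformly in $i$.

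Next I would use the contraction hypothesis to pull each $x_i$ into the unit ball by a short word. Fix, for each $i$, an element $a_i\in A$ acting on $\mathbf{K}_i$ by multiplication by a scalar of norm $\rho_i<1$; then $a_i^{-k}$ expands $\mathbf{K}_i$ by $\rho_i^{-k}$. Choosing $k_i=\lceil\log|x_i|/\log(1/\rho_i)\rceil$ (and $k_i=0$ when $|x_i|\le 1$), the element $v_i:=a_i^{k_i}\cdot x_i$ satisfies $|v_i|=\rho_i^{k_i}|x_i|\le 1$, hence $v_i\in S_i$; setting $t_i=a_i^{-k_i}$ gives $t_i\cdot v_i=x_i$ with $|t_i|_S\le|a_i|_S\,k_i=O(\log|x_i|)=O(|w|_S)$. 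Taking $t$ to be a geodesic word for $a$ (of length $|a|\le|w|_S$) completes the construction, and since $m$ is fixed the resulting word $w'=\bigl(\prod_it_iv_it_i^{-1}\bigr)t$ has length $\sum_i(2|t_i|_S+1)+|t|_S=O(|w|_S)$, establishing efficiency.

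The main point to get right is the uniform exponential bound on $|x_i|$ in terms of $|w|_S$: it is what turns the logarithmically many expansion steps into a linear count, and it requires checking that the constants $M$, $\rho_i$ and $|a_i|_S$ can be chosen independently of $w$. Note that this argument uses only that each generator of $A$ acts by a bounded scalar and that a single contracting element exists per factor, so, as the statement asserts, the full pairwise-contraction hypothesis of Theorem \ref{meta} is not needed at this stage; it will be invoked later, when the restricted Dehn function $\delta_{\mathcal{F}[k]}$ is estimated.
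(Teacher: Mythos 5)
Your proposal is correct and follows essentially the same route as the paper's proof: an a priori exponential bound on the coordinates ($\log\|x_i\|=O(|w|_S)$) obtained by reading the word letter by letter, followed by writing each $x_i=t_i\cdot v_i$ with $t_i$ of linear length built from a contracting element for $\mathbf{K}_i$ (the paper equivalently uses a dilating generator in $T$, which exists because $T$ is symmetric). The only trifle is that summing $|w|_S$ contributions of norm $\le M^{|w|_S}$ yields $|w|_S\,M^{|w|_S}$ rather than $C_0M^{|w|_S}$, but this is immaterial since only the logarithmic order of $\|x_i\|$ is used (the paper sidesteps it by taking $C\ge 2$ so that $C^n+C^n\le C^{n+1}$ in its induction).
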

\begin{proof} Let $t\in T$ act on $\mathbf{K}_i$ by multiplication by $\lambda_i(t)\in\mathbf{K}_i^*$. Define $c>1$ by $c=\min_i(\max_t|\lambda_i(t)|)$ and $C=\max(2,\max_{t,i}|\lambda_i(t)|)$. 

We first claim that every word of length $\le n$ in the generating set $\bigcup S_i\cup T$ represents an element $xt=((x_i),t)$ of $\bigoplus\mathbf{K}_i\rtimes A$ with 
$\|x_i\|\le C^n$ and $|t|\le n$. This is checked by induction on $n$. If we multiply on the right by an element of $T$, the induction step works trivially. Let us look when we multiply on the right by an element $v$ of $S_i$. Then $xtv=x(tvt^{-1})t$. Then in $\mathbf{K}_i$, $\|tvt^{-1}\|\le C^n$, so $\|x+tvt^{-1}\|\le C^n+C^n\le C^{n+1}$.

Now consider such an element $((x_i),t)$, with $\|x_i\|\le C^n$ and $|t|\le n$ and write it as an element of $\mathcal{F}$ of length $\le Kn$ for some constant $K$. By definition of $c$, we can write $x_i=t_iv_it_i^{-1}$ with $t_i$ a word on the alphabet $T$ of length at most
$$\left\lceil\log_c(C^n)\right\rceil\le n\lceil \log C/\log c\rceil$$
and $\|v_i\|\le 1$ a letter of $S_i$. So the element is represented by the word $(\prod_1^mt_iv_it_i^{-1})t$, which belongs to $\mathcal{F}$ and has length $\le n(2m\lceil\log C/\log c\rceil+1)$.
\end{proof}

\begin{proof}[{\rm \bf Proof of Theorem \ref{meta}}]
The group $G$ has an obvious retraction onto $A\simeq\mathbf{Z}^d$, and therefore its Dehn function is bounded below by that of $\mathbf{Z}^d$, which is linear if $d=1$ and quadratic otherwise.

Let us now prove the upper bound on the Dehn function. As we mentioned above, we can suppose that each $V_i$ is a local field $\mathbf{K}_i$ with action of $A$ by scalar multiplication. Take $S=T\cup\bigcup S_i$ as set of generators. Define $\mathcal{R}$ as the set of relators consisting of
\begin{itemize}
\item finitely many defining relations of $A$ with respect to $T$;
\item relations of length 4 of the form $tst^{-1}=s'$, $t\in T$, $s,s'\in S_i$ for some i;
\item relations of length 4 of the form $[s,s']=1$ when $s\in S_i,s'\in S_j$;
\item relations of length 3 of the form $ss'=s''$ when $s,s',s''\in S_i$.
\end{itemize}
The proof that follows will show that this is a presentation of $G$ and that the corresponding Dehn function is quadratically bounded.

Denote by $\alpha$ the corresponding area function with respect to $\mathcal{R}$. 
Define $$c(m_1,m_2)=\alpha(m_1^{-1}m_2)$$ as the corresponding bi-invariant $[0,\infty]$-valued distance. (The claim that $\mathcal{R}$ is a set of defining relators of $G$ is equivalent to showing that $c$ takes finite values on pairs $(m_1,m_2)$ of homotopic [i.e.\ $m_1\equiv m_2$] words).
It is useful to think of $c$ as the {\it cost} of going from $m_1$ to $m_2$. Recall that we write equality of words as $=$ and equality in $G$ as $\equiv$. By the triangular inequality together with bi-invariance, we get the following useful ``substitution inequality", which we use throughout
$$\alpha(xyz)\le \alpha(xy'z)+c(y,y').$$

\begin{cla}Under the assumptions of Lemma \ref{feff}, there exists a constant $C$ such that for all $i$, if $v,w\in S_i$ and if $s$ is a word of length $\le n$ with respect to $T$ and $svs^{-1}\equiv w$ (i.e.\ $s\cdot v=w$ for the given action), then $c(svs^{-1},w)\le Cn^2$.\label{recon}\end{cla}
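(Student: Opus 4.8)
The plan is to reduce the word $svs^{-1}$ to the single letter $w$ by pushing $v$ through $s$ one generator at a time, starting from the innermost one. Write $s=t_1\cdots t_n$ with $t_\ell\in T$, and set $y_\ell=(t_{n-\ell+1}\cdots t_n)\cdot v\in\mathbf{K}_i$, so that $y_0=v$, $y_n=w$, and $y_\ell=t_{n-\ell+1}\cdot y_{\ell-1}$. By the induction in the proof of Lemma~\ref{feff} each $\|y_\ell\|\le C^n$. Using the substitution inequality $\alpha(xyz)\le\alpha(xy'z)+c(y,y')$ repeatedly, it suffices to maintain for each $\ell$ a word $W_\ell$ representing $y_\ell$ and to bound the cost $c(t\,W_{\ell-1}\,t^{-1},W_\ell)$ of each elementary step (with $t=t_{n-\ell+1}$); the total cost of reducing $svs^{-1}$ to $w$ is then at most $\sum_\ell c(t\,W_{\ell-1}\,t^{-1},W_\ell)$.

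To make each step cheap I would represent $y_\ell$ in a normal form adapted to a single expanding direction. The hypothesis of Lemma~\ref{feff} furnishes, for each $i$, an element of $A$ contracting $\mathbf{K}_i$; let $b_i\in A$ be its inverse, acting by an expansion of norm $\mu_i>1$. When $\|y_\ell\|\le1$ I take $W_\ell=y_\ell$, a single letter of $S_i$; when $\|y_\ell\|>1$ I write $W_\ell=b_i^{k}\,u\,b_i^{-k}$ with $u=b_i^{-k}\cdot y_\ell\in S_i$ and $k=\lceil\log\|y_\ell\|/\log\mu_i\rceil=O(n)$. Because $b_i$ expands at every application, this representation is ``monotone'': the partial products $b_i^{j}\cdot u$ have norms increasing in $j$, which is what keeps transitions between normal forms controlled.

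Each elementary step is then cheap for two reasons. Since $A$ is abelian, $t$ commutes with $b_i$, so $t\,W_{\ell-1}\,t^{-1}$ equals $b_i^{k}(t\cdot u)b_i^{-k}$ after free rearrangement in $A$; the inner factor $t\cdot u$ has norm at most $C$, hence equals $b_i^{e}\cdot s'$ for some $s'\in S_i$ and $e=O(1)$, reached by $O(1)$ applications of the relators $tst^{-1}=s'$ (note $b_i$ is a fixed word in $T$, so multiplying a letter of $S_i$ by $\lambda_i(b_i)^{\pm1}$ costs $O(1)$). It remains to match $b_i^{k+e}s'b_i^{-(k+e)}$ with the canonical word $W_\ell$, whose exponent differs from $k+e$ by $O(1)$; since $c$ is bi-invariant, conjugation by a common power of $b_i$ is free, so this matching reduces to comparing two words of bounded length representing a common element of bounded norm, at cost $O(1)$. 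Thus each step costs $O(1)$ and the total is $O(n)$, which certainly satisfies the stated bound $Cn^2$, with $C$ uniform over the finitely many indices $i$. The one delicate point—where a naive approach would give a cubic bound—is that the intermediate elements have norm up to $C^n$ and so are unavoidably represented by conjugates of length $\Theta(n)$; the argument works only because the single-expander normal form keeps these conjugates aligned and the bi-invariance of $c$ renders the long conjugating prefixes free. One should finally check that this normal form serves uniformly in the archimedean case, where $S_i$ is an interval and elements of norm $>1$ genuinely lie outside $S_i$, and in the non-archimedean case, where $S_i$ is the additive subgroup $\mathcal{O}_i$; in both, multiplication by $b_i^{\pm1}$ interacts with the unit ball exactly as used above.
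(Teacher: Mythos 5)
Your overall strategy---transporting $v$ through $s$ one letter at a time while maintaining a single-expander normal form $b_i^{k}ub_i^{-k}$---is genuinely different from the paper's proof, and it can be made to work, but your cost accounting contains a real error. The step where you assert that $t\,b_i^{k}ub_i^{-k}\,t^{-1}$ equals $b_i^{k}(t\cdot u)b_i^{-k}$ ``after free rearrangement in $A$'' is false: these are distinct words in the free group $F_S$, and transforming one into the other requires filling the commutator word $[t,b_i^{k}]$, which in the given presentation costs $\Theta(k)=\Theta(n)$ applications of the defining relations of $A$. Bi-invariance of $c$ lets you cancel a \emph{literally common} prefix or suffix (as in your matching step, which is fine), but it does not make commutation free---this is precisely why $\mathbf{Z}^d$, $d\ge 2$, has quadratic rather than linear Dehn function. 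Consequently each of your $n$ elementary steps costs $O(n)$, not $O(1)$, your total is $O(n^2)$, not $O(n)$, and your closing discussion of the ``delicate point'' (claiming bi-invariance renders the long conjugating prefixes free) is wrong for the same reason. Fortunately $O(n^2)$ is exactly what the Claim requires, so the corrected accounting still proves the statement. A minor further point: for your $O(1)$ conjugation steps to pass only through letters of $S_i$, you should take $b_i^{-1}$ to be a single contracting letter of $T$ (enlarging $T$, as the paper does for the $s_{ij}$); if $b_i$ is a longer word, intermediate prefixes can carry a unit-ball element outside $S_i$, where no relator of $\mathcal{R}$ applies---note that in the non-archimedean case an element of norm $>1$ is not even a product of letters of $S_i$.

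For comparison, the paper avoids large intermediate norms altogether: since $A$ is abelian, it replaces $s$ by a reordered word $t=t_1t_2$ in which all contracting letters act first, so that every intermediate conjugate $\tau v\tau^{-1}$ stays in $S_i$ (norms first decrease, then increase monotonically up to $\|w\|\le 1$), and the conjugation then costs only $n$ relators; the entire quadratic cost is paid once, in the substitution $c(s,t)\le C_1n^2$ coming from the Dehn function of $A$. Both routes ultimately localize the quadratic cost in commutations inside $A$; the paper's version is shorter because it never has to represent elements of norm $>1$, whereas yours spreads the commutation cost over the $n$ steps via the normal form $b_i^{k}ub_i^{-k}$.
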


Indeed, we can write $s\equiv t$ with $t=t_1t_2$ a word of length $n$ in $T$, where all letters in $t_2$ contract $\mathbf{K}_i$ and all letters in $t_1$ dilate $\mathbf{K}_i$. Clearly, for every right terminal segment $\tau_j$ of $t$ (made of the $j$ last letters in $t$), $\tau v\tau^{-1}$ represents an element $w_j$ of $S_i$. Therefore an immediate induction on the $j$ provides $c(\tau v\tau^{-1},w_j)\le|\tau|$, so $c(tvt^{-1},w)\le n$. Now $A$ has a quadratic Dehn function, so $c(s,t)\le C_1n^2$, hence $c(svs^{-1},tvt^{-1})\le 2C_1n^2$ and thus $$c(svs^{-1},w)\le n+2C_1n^2\le C_2n^2$$ and the claim is proved.

By assumption (of the theorem), there exist elements $s_{ij}$ contracting both $\mathbf{K}_i$ and $\mathbf{K}_j$. We can suppose that all $s_{ij}$ belong to $T$ (enlarging $T$ if necessary). If $s\in S$, let it act on $\mathbf{K}_i$ by multiplication by $\lambda_{s,i}$.  There exists a positive integer $M$ (depending only on $G$ and $T$) such that for all $t\in T$, $s_{ij}^Mt$ contracts both $\mathbf{K}_i$ and $\mathbf{K}_j$.

\begin{cla}\label{coqua}
There exists a constant $C$ such that for all $n$, whenever $t,u$ are words of length $\le n$ with respect to $T$ and $(v,w)\in S_i\times S_j$, we have
$$\alpha([tvt^{-1},uwu^{-1}])\le Cn^2.$$
\end{cla}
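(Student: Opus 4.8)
The plan is to use the common contraction $s_{ij}$ to push \emph{both} entries of the commutator down into the unit balls $S_i$ and $S_j$ at the \emph{same} conjugation level, so that the two outer conjugations cancel and what remains is a commutator of two ball elements, which vanishes for an obvious reason. Write $s=s_{ij}\in T$ and set $p=Mn$. The first thing I would record is the point of $M$: if $t=\tau_1\cdots\tau_\ell$ is a word of length $\ell\le n$ in $T$, then, $A$ being abelian, $s^{p}t$ represents the product $(s^{M}\tau_1)\cdots(s^{M}\tau_\ell)\,(s^{M})^{\,n-\ell}$, each factor of which contracts both $\mathbf{K}_i$ and $\mathbf{K}_j$; hence $s^{p}t$ contracts both. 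Consequently $v'':=(s^{p}t)\cdot v$ lies in $S_i$ and $w'':=(s^{p}u)\cdot w$ lies in $S_j$, and in $G$ we have
\[
 tvt^{-1}\equiv s^{-p}v''s^{p},\qquad uwu^{-1}\equiv s^{-p}w''s^{p}.
\]

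The first estimate is the cost of these two substitutions. Since $(s^{p}t)\,v\,(s^{p}t)^{-1}\equiv v''$ with $s^{p}t$ of length $\le(M+1)n$, Claim~\ref{recon} gives $c\big((s^{p}t)v(s^{p}t)^{-1},v''\big)\le C(M+1)^{2}n^{2}$; conjugating both entries by $s^{p}$ and using bi-invariance of $c$, this quantity \emph{equals} $c(tvt^{-1},s^{-p}v''s^{p})$. The same argument bounds $c(uwu^{-1},s^{-p}w''s^{p})$ by $O(n^2)$. Next I would feed these into the commutator with the substitution inequality, applied four times (once for each occurrence of $tvt^{-1}$ and of $uwu^{-1}$, using $c(y^{-1},y'^{-1})=c(y,y')$), to obtain
\[
 \alpha\big([tvt^{-1},uwu^{-1}]\big)\le \alpha\big([\,s^{-p}v''s^{p},\,s^{-p}w''s^{p}\,]\big)+O(n^{2}).
\]

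Having used the \emph{same} exponent $p$ on both sides now pays off: the word $[s^{-p}v''s^{p},s^{-p}w''s^{p}]$ freely reduces to $s^{-p}[v'',w'']s^{p}$, so by conjugation-invariance of $\alpha$ it suffices to bound $\alpha([v'',w''])$ with $v''\in S_i$, $w''\in S_j$. When $i\ne j$ this commutator is one of the length-$4$ relators $[s,s']=1$ ($s\in S_i$, $s'\in S_j$), hence has area $\le 1$. When $i=j$ the two elements lie in the abelian group $\mathbf{K}_i$; taking $p$ a bounded amount larger (still $O(n)$) so that $v'',w''\in\tfrac12 S_i$, their sum lies in $S_i$ and the commutator is killed with area $\le 2$ by the length-$3$ relators $ss'=s''$. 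Summing all contributions gives $\alpha([tvt^{-1},uwu^{-1}])\le Cn^{2}$.

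The main obstacle, and the place where the hypothesis is used in full, is the insistence that the two conjugators coincide. This forces both $\mathbf{K}_i$ and $\mathbf{K}_j$ to be contracted by powers of a \emph{single} element $s_{ij}$: if we only knew that each factor is separately contracted, by possibly distinct elements of $A$, we could still push each side into its ball but at incompatible levels, and the cancellation producing $s^{-p}[v'',w'']s^{p}$ would break down. The uniform integer $M$ is exactly what guarantees that one linear power $p=Mn$ simultaneously brings $v$ and $w$ into their respective balls for \emph{every} pair of words $t,u$ of length $\le n$, while keeping the reconjugation cost quadratic through Claim~\ref{recon} — whose own quadratic bound ultimately rests on the quadratic Dehn function of $A\cong\mathbf{Z}^{d}$.
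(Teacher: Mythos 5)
Your proof is correct and follows essentially the same route as the paper: conjugate by $s_{ij}^{Mn}$ (using the integer $M$ so that the conjugator contracts both $\mathbf{K}_i$ and $\mathbf{K}_j$ uniformly over all words $t,u$ of length $\le n$), replace each conjugated generator by a ball element at quadratic cost via Claim~\ref{recon} and the substitution inequality, and finish with a single bounded-area commutator relation. The only deviation is your special-casing of $i=j$ via the length-$3$ relators, which is harmless but unnecessary, since the paper's relator list $[s,s']=1$ for $s\in S_i$, $s'\in S_j$ already covers $i=j$ (each $\mathbf{K}_i$ being abelian), so $\alpha([v'',w''])=1$ in all cases.
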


From now on, we write $a_n\preceq b_n$ if there exists a constant $C$ (depending only on the group and on $T$) such that for all $n\ge 1$ we have $a_n\le C b_n$. 

To prove Claim \ref{coqua}, set $s=s_{ij}^{Mn}$, so that $st$ contracts $\mathbf{K}_i$ and $su$ contracts $\mathbf{K}_j$.
We have 
$$\alpha([tvt^{-1},uwu^{-1}])=\alpha([stvt^{-1}s^{-1},suwu^{-1}s^{-1}]).$$
We have $stvt^{-1}s^{-1}\equiv v'$ for some $v'\in S_i$ and similarly 
$suwu^{-1}s^{-1}\equiv w'$ for $w'\in S_j$. By Claim \ref{recon}, we have $c(stvt^{-1}s^{-1},v')\preceq n^2$ and $c(suwu^{-1}s^{-1},w')\preceq n^2$. So $$\alpha([tvt^{-1},uwu^{-1}])\le \alpha([v',w'])+2c(stvt^{-1}s^{-1},v')+2c(suwu^{-1}s^{-1},w').$$
(The factor 2 arises since we perform two successive substitutions, as the commutator $[x,y]$ involves two times the letter $x$.) Since $\alpha([v',w'])=1$, we deduce
$$\alpha([tvt^{-1},uwu^{-1}])\preceq n^2.$$
and Claim \ref{coqua} is proved.

Let $\mathcal{F}$ denote the efficient set of words introduced in Lemma \ref{feff}. Fix any integer $k_0$ and let $w$ be a null-homotopic word in $\mathcal{F}[k_0]$, of length at most $n$. It will be convenient to write it as $\prod_{j=1}^{R} t_j v_j$, where $R=mk_0$, $t_j$ is a word in $T$ with $|t_j|\le n$, and $v_j$ is a letter in $\bigcup S_i$. Set $u_j=t_1\dots t_j$, so $|u_j|\le Rn$. Then the word is equal to $$\left(\prod_{j=1}^{R} u_j v_ju_{j}^{-1}\right)u_{R}.$$ Since this word is null-homotopic, $u_{R}\equiv 1$ and $\alpha(u_R)\preceq n^2$. By performing at most $R^2$ times relations of the form $[u_j v_ju_{j}^{-1},u_k v_ku_{k}^{-1}]$, each of which has area $\preceq n^2$ by Claim \ref{coqua}, we rewrite the product above, gathering the terms $u_j v_ju_{j}^{-1}$ for which $v_j$ belongs to the same $S_i$. Namely, we obtain with cost $\preceq R^2n^2\preceq n^2$ a word $$\prod_{i=1}^mw_i,$$ where each $w_i$ is a null-homotopic word of the form $$w_i=\prod_{k=1}^{R_i}u_{j(i,k)}v_{j(i,k)}u_{j(i,k)}^{-1},$$ where $\sum_i R_i=R$, and $v_{j(i,k)}\in S_i$. We are therefore left to prove that $\alpha(w_i)\preceq n^2$ for every $1\leq i\leq m$. 

Set $s=s_{ii}^{MRn}$. Then $su_{j(i,k)}v_{j(i,k)}u_{j(i,k)}^{-1}s^{-1}$ represents an element $x(i,k)$ of $S_i$, and by the Claim \ref{recon}, we have $c(su_{j(i,k)}v_{j(i,k)}u_{j(i,k)}^{-1}s^{-1},x(i,k))\preceq n^2$. This reduces to compute the area of the word $x(i,1)\dots x(i,R_i)$, which is bounded above by $R_i\le R$, i.e.\ by a constant. Therefore the area of $w_i$ is $\preceq n^2$, and accordingly the original word $w$ has area $\preceq n^2$. 

We have just proved that for any $k_0$, there exists a constant $C_{k_0}$ such that for all $n\ge 1$, we have $\delta_{\mathcal{F}[k_0]}(n)\le C_{k_0}n^2$.
By Proposition \ref{quadra} we deduce that the Dehn function of $G$ is at most quadratic.

If $d=1$, since $\mathbf{Z}$ has linear Dehn function, the same proof works to prove that $\delta_{\mathcal{F}[k_0]}$ has linear growth for any $k_0$. However Proposition \ref{quadra} does not apply for $\zeta=1$; we can nevertheless apply it for $\zeta=3/2$, so that the Dehn function of $G$ is asymptotically bounded by $n^{3/2}$, so is subquadratic. A general argument implies that this forces the Dehn function to be linear~\cite{Bowditch}.
\end{proof}


\baselineskip=16pt


\bigskip

\footnotesize

\end{document}